\newtheoremstyle{mystyle2}{}{}{}{2pt}{\scshape}{.}{ }{}
\newtheoremstyle{mystyle}{}{}{\slshape}{2pt}{\scshape}{.}{ }{}
\newtheoremstyle{etapestyle}{}{}{\itshape}{2em}{\sffamily}{:}{ }{\thmname{#1}}
\newtheoremstyle{definitionstyle}{}{}{}{2pt}{\bfseries}{.}{ }{}
\newtheorem{thm}{Th\'{e}or\`{e}me}[section]
\newtheorem{cor}[thm]{Corollaire}
\newtheorem{prop}[thm]{Proposition}
\newtheorem{lemme}[thm]{Lemme}
\newtheorem*{prop*}{Proposition}
\theoremstyle{mystyle2}
\theoremstyle{mystyle}
\theoremstyle{remark}
\newtheorem{rem}[thm]{Remarque}
\newtheorem{question}[thm]{Question}
\theoremstyle{etapestyle}
\theoremstyle{definitionstyle}
 \DeclareMathOperator{\car}{car}
 \DeclareMathOperator{\Spec}{Spec}
 \DeclareMathOperator{\Coker}{Coker}
\DeclareMathOperator{\Ker}{Ker} 
\DeclareMathOperator{\Pic}{Pic}
\DeclareMathOperator{\Hilb}{Hilb}
\DeclareMathOperator{\Ima}{Im}
\DeclareMathOperator{\Id}{Id}
\DeclareMathOperator{\pfaff}{pfaff}
\DeclareMathOperator{\Aut}{Aut}
\newenvironment{msc}{\begin{center}
\begin{minipage}[c]{11cm} {\bf MSC classification:}} {\end{minipage}
\end{center} %\bigskip
}
\begin{document}

\title{S\'eparation et propri\'et\'e de Deligne-Mumford des champs de modules d'intersections compl\`etes lisses}
%\author{Olivier BENOIST\footnote{\'Ecole Normale Sup\'erieure, D\'epartement de Math\'ematiques et Applications,  
%45 rue d'Ulm 75005 Paris, France, email : {olivier.benoist@ens.fr}}}
%\date{}
\author{Olivier BENOIST
\footnote{DMA, ENS, 45 rue d'Ulm, 75005 Paris, FRANCE, {\ttfamily olivier.benoist@ens.fr}}
% \\
%  \multicolumn{1}{p{.9\textwidth}}{\centering\normalsize{\emph{ENS, DMA, 45 rue d'Ulm, 75005 Paris, FRANCE}}}\\
%  \multicolumn{1}{p{.9\textwidth}}{\centering\normalsize{\emph{olivier.benoist@ens.fr}}}
}
\date{}
\maketitle
\renewcommand{\abstractname}{R\'esum\'e}
\begin{abstract} 
On montre que les champs de modules d'intersections compl\`etes lisses dans $\mathbb{P}^N$ polaris\'ees par $\mathcal{O}(1)$ sont s\'epar\'es
(sauf dans le cas des quadriques) et de Deligne-Mumford (sauf pour quelques exceptions). 
 \end{abstract}
   \selectlanguage{english}
\renewcommand{\abstractname}{Abstract}
\begin{abstract} 
We show that the moduli stacks of smooth complete
intersections in $\mathbb{P}^N$ polarized by $\mathcal{O}(1)$ are separated (except for quadrics) and Deligne-Mumford (apart from a few exceptions).\end{abstract}
\selectlanguage{frenchb}
\begin{msc}
14D23, 14M10, 14J50
\end{msc}

%\newpage

%\tableofcontents

%\newpage

\section{Introduction}

On commence par discuter la question g\'en\'erale
de la s\'eparation d'un champ de modules de vari\'et\'es polaris\'ees, avant de se restreindre au cas
particulier abord\'e dans cet article : le champ de modules des intersections compl\`etes lisses polaris\'ees par $\mathcal{O}(1)$.

\subsection{S\'eparation d'espaces de modules}

  Soit $\mathcal{M}$ un champ de modules de vari\'et\'es projectives lisses polaris\'ees. 
On s'int\'e\-resse \`a la s\'eparation du champ $\mathcal{M}$. Un r\'esultat classique est le th\'eor\`eme de Matsusaka et Mumford (\cite{MM} Theorem 2):
\begin{thm}\label{MM}
 Si les vari\'et\'es que $\mathcal{M}$ param\`etre ne sont pas birationnellement r\'egl\'ees (i.e. ne sont pas birationnelles \`a une vari\'et\'e
de la forme $\mathbb{P}^1\times Y$), le champ $\mathcal{M}$ est s\'epar\'e. 
\end{thm}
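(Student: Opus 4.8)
The plan is to verify the valuative criterion of separatedness for $\mathcal{M}$. Let $R$ be a discrete valuation ring with fraction field $K$ and closed point $0$; after the usual reductions I may assume $R$ complete with algebraically closed residue field. Two objects of $\mathcal{M}$ over $\Spec R$ amount to flat projective families $(\mathcal{X},L_{\mathcal{X}})$ and $(\mathcal{Y},L_{\mathcal{Y}})$ of polarized varieties, and I am given a polarized isomorphism $\phi_K\colon\mathcal{X}_K\xrightarrow{\sim}\mathcal{Y}_K$ over $K$. Separatedness follows once I show that $\phi_K$ extends to an isomorphism $\mathcal{X}\xrightarrow{\sim}\mathcal{Y}$ over $R$.

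The central construction is the graph closure. Inside the projective $R$-scheme $\mathcal{X}\times_R\mathcal{Y}$, let $\Gamma$ be the scheme-theoretic closure of the graph $\Gamma_{\phi_K}\subset\mathcal{X}_K\times_K\mathcal{Y}_K$. Being the scheme-theoretic closure of the generic fibre, $\Gamma$ is flat over $R$, with generic fibre $\Gamma_K\cong\mathcal{X}_K\cong\mathcal{Y}_K$, and the two projections $p\colon\Gamma\to\mathcal{X}$, $q\colon\Gamma\to\mathcal{Y}$ are isomorphisms over $K$. Because $\Gamma$ is $R$-flat and $\mathcal{X}_0,\mathcal{Y}_0$ are smooth and integral, the special fibre $\Gamma_0$ contains a distinguished main component $\Gamma_0'$ on which, by specialization of degrees in the flat family $\Gamma/R$, the maps $p$ and $q$ are birational onto $\mathcal{X}_0$ and $\mathcal{Y}_0$. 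Composing, I obtain a birational map $f_0\colon\mathcal{X}_0\dashrightarrow\mathcal{Y}_0$; moreover the equality $\phi_K^*L_{\mathcal{Y}}=L_{\mathcal{X}}$ specializes to the statement that $f_0$ matches the two polarizations, i.e. that $p^*L_{\mathcal{X}}$ and $q^*L_{\mathcal{Y}}$ agree on $\Gamma_0'$.

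It remains to upgrade the polarized birational map $f_0$ to an isomorphism, and this is where the non-ruledness hypothesis enters. Resolving $f_0$ by a smooth $W$ with morphisms $g\colon W\to\mathcal{X}_0$ and $h\colon W\to\mathcal{Y}_0$, the ampleness of $L_{\mathcal{X}},L_{\mathcal{Y}}$ together with the relation $h^*L_{\mathcal{Y}}=g^*L_{\mathcal{X}}$ up to exceptional divisors forces any curve contracted by $g$ to meet the exceptional locus in a controlled, negative fashion. The Matsusaka--Mumford mechanism then shows that the locus where $f_0$ fails to be an isomorphism is swept out by rational curves which organize $\mathcal{X}_0$ (or $\mathcal{Y}_0$) into a family with $\mathbb{P}^1$-fibres, so that $\mathcal{X}_0$ would be birational to some $\mathbb{P}^1\times Y$. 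This is excluded under the hypothesis of Theorem~\ref{MM}, so $f_0$ has empty exceptional locus and is an isomorphism of polarized varieties; lifting back, $\Gamma$ is the graph of an isomorphism $\mathcal{X}\xrightarrow{\sim}\mathcal{Y}$ extending $\phi_K$.

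The hard part is this last step: turning a nontrivial polarized birational correspondence into a genuine ruling. One must first discard the extra, possibly non-reduced, components of $\Gamma_0$ (again using that the polarization bounds their degrees), and then convert the covering of the contracted locus by rational curves into an honest birational equivalence with a product $\mathbb{P}^1\times Y$; this is precisely the geometric core of Matsusaka and Mumford's argument, and the only place where the geometry of the fibres, rather than formal deformation theory, intervenes. An alternative but equivalent route embeds both families in a common Hilbert scheme via $L^{\otimes n}$ and reinterprets $\phi_K$ as an element $g_K\in\PGL_{N+1}(K)$, reducing the claim to showing that the limit of $g_K$ at $0$ cannot be a degenerate (non-invertible) projective transformation unless the fibres are ruled.
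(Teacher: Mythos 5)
A preliminary remark: the paper does not prove this theorem at all --- it quotes it from \cite{MM} (Theorem 2) --- so your attempt can only be measured against Matsusaka and Mumford's own argument, whose general shape (valuative criterion, graph closure, specialization of degrees) you have correctly reproduced. There is, however, a genuine gap, and it sits exactly where the non-ruledness hypothesis must do its work. You claim that, ``by specialization of degrees'', the special fibre $\Gamma_0$ of the graph closure has a single main component on which \emph{both} projections are birational, onto $\mathcal{X}_0$ and onto $\mathcal{Y}_0$. What the degree argument actually gives is weaker: since $(p^*L_{\mathcal{X}})^n\cdot\Gamma_0=(p^*L_{\mathcal{X}})^n\cdot\Gamma_K=\deg_{L_{\mathcal{X}}}\mathcal{X}_0$ and each component of $\Gamma_0$ dominating $\mathcal{X}_0$ contributes a positive multiple of $\deg_{L_{\mathcal{X}}}\mathcal{X}_0$, there is a \emph{unique} component $Z_1$ dominating $\mathcal{X}_0$, and it is birational onto it; symmetrically, a unique component $Z_2$ birational onto $\mathcal{Y}_0$. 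Nothing formal forces $Z_1=Z_2$, and this can genuinely fail: take $\mathcal{X}=\mathcal{Y}=\mathbb{P}^2_R$ with $L=\mathcal{O}(1)$ and $\phi_K=\mathrm{diag}(1,t,t^2)\in\PGL_3(K)$; the flat limit of the graphs contains $Z_1=\mathbb{P}^2\times\{[1:0:0]\}$ (birational over the source, contracted in the target) and $Z_2=\{[0:0:1]\}\times\mathbb{P}^2$, which are distinct. So your ``main component'' step is false precisely when the fibres are ruled, which shows the hypothesis must be consumed there. In Matsusaka--Mumford it is: if $Z_2\neq Z_1$, then $Z_2$ is a divisor of $\Gamma$ contracted by the proper birational morphism $\Gamma\to\mathcal{X}$, whose target is a \emph{regular} scheme (as $\mathcal{X}_K$ and $\mathcal{X}_0$ are smooth); by the classical Zariski--Abhyankar theorem, an exceptional divisor over a regular base is ruled, so $Z_2$, hence $\mathcal{Y}_0$, would be ruled --- contradiction. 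This identification of the two one-sided main components is the heart of the theorem, and it is missing from your proof.

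By contrast, the step where you do invoke non-ruledness --- upgrading the polarized birational map $f_0$ to an isomorphism --- needs no such hypothesis: once the graph of $f_0$ carries matching ample polarizations, a curve contracted by one projection of the (normalized) graph has degree $0$ against both pullbacks, hence is contracted by the other projection as well, and Zariski's main theorem makes $f_0$ biregular. (Your appeal to a smooth resolution $W$ is also unavailable in this setting: the statement is used over $\Spec(\mathbb{Z})$, in arbitrary and mixed characteristic, whereas Matsusaka--Mumford's proof is characteristic-free.) The net effect is that, as written, your argument never truly uses the hypothesis, so it would ``prove'' separatedness for $\mathbb{P}^2$ polarized by $\mathcal{O}(1)$, which is false by the example above or by remark \ref{remautofini}. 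Your closing ``alternative route'' via Hilbert schemes is not a shortcut either: showing that the limit of $g_K$ in $\PGL_{N+1}$ is non-degenerate is the entire difficulty --- it is exactly what this paper spends its Section 2 doing in the special case of complete intersections.
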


  On peut chercher \`a \'etendre ce r\'esultat \`a d'autres classes de vari\'et\'es. 
Commen\c{c}ons par une remarque facile et utile. Les stabilisateurs des points g\'eom\'e\-triques de $\mathcal{M}$ s'identifient aux groupes d'automorphismes
des vari\'et\'es polaris\'ees param\'etr\'ees par $\mathcal{M}$, qui sont des sch\'emas en groupes affines. Si $\mathcal{M}$ est s\'epar\'e, ces 
stabilisateurs doivent \^etre propres ; comme ils sont donc propres et affines, ils sont m\^eme finis.
On a montr\'e :
\begin{rem}\label{remautofini}
 Une condition n\'ecessaire pour que $\mathcal{M}$ soit s\'epar\'e est que les vari\'et\'es polaris\'ees que $\mathcal{M}$ param\`etre aient des groupes
d'automorphismes finis.
\end{rem}

 % Autres contre-exemples.

  Un cas particulier int\'eressant est celui o\`u $\mathcal{M}$ param\`etre des vari\'et\'es de Fano.
Ce cas particulier est motiv\'e par l'intervention en th\'eorie de Mori
de fibrations \`a fibres vari\'et\'es de Fano, donc de familles de vari\'et\'es de Fano.

\vspace{1em}

Regardons des exemples de petite dimension.
En dimension $1$, la seule vari\'et\'e de Fano est $\mathbb{P}^1$, dont le groupe d'automorphismes est de dimension $>0$ ;
par la remarque \ref{remautofini}, la situation n'est pas int\'eressante. 

En dimension $2$, les vari\'et\'es de Fano sont les surfaces de del Pezzo. Notons $\mathcal{M}$ le champ de modules des surfaces de del Pezzo de degr\'e $d$.
Au vu de la remarque \ref{remautofini}, la question de la s\'eparation de $\mathcal{M}$ est int\'eressante si $1\leq d\leq 5$.
Dans ce cas, l'article \cite{Ishii} propose une construction d'un espace de modules grossier pour $\mathcal{M}$
\`a l'aide de th\'eorie g\'eom\'etrique des invariants, qui montre que le champ
$\mathcal{M}$ est s\'epar\'e (appliquer \cite{GIT} Corollary 2.5).

En dimension $3$, on peut parfois appliquer le th\'eor\`eme \ref{MM} de Matsusaka et Mumford via le corollaire suivant :
\begin{cor}\label{sepFano3}
En caract\'eristique nulle, si $\mathcal{M}$ param\`etre des
solides de Fano qui ne sont pas rationnels, $\mathcal{M}$ est n\'ecessairement s\'epar\'e.
\end{cor}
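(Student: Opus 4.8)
Le plan est de déduire ce corollaire du théorème \ref{MM} de Matsusaka et Mumford. Il suffit pour cela de vérifier qu'un solide de Fano qui n'est pas rationnel n'est pas birationnellement réglé ; je raisonne donc par contraposée, en supposant qu'un solide de Fano $X$ est birationnellement réglé, c'est-à-dire birationnel à $\mathbb{P}^1\times Y$ pour une surface lisse projective $Y$, et en montrant que $X$ est alors rationnel.

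La première étape consiste à invoquer le fait qu'en caractéristique nulle, les variétés de Fano sont rationnellement connexes (Campana, Koll\'ar--Miyaoka--Mori). Comme la connexité rationnelle est un invariant birationnel des variétés projectives lisses, la variété $\mathbb{P}^1\times Y$ est rationnellement connexe ; par projection sur le second facteur (l'image d'une variété rationnellement connexe par un morphisme dominant l'est encore), la surface $Y$ est elle aussi rationnellement connexe.

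La seconde étape est d'en conclure que $Y$ est rationnelle, via le critère de Castelnuovo. En effet, la connexité rationnelle de $Y$ force l'annulation de tous les espaces $H^0(Y,(\Omega^1_Y)^{\otimes m})$ pour $m\geq 1$, d'où en particulier $q(Y)=0$ et $P_2(Y)=0$, ce qui caractérise les surfaces rationnelles. On obtient alors que $\mathbb{P}^1\times Y$ est rationnelle, donc que $X$ l'est aussi, achevant la contraposée et permettant d'appliquer le théorème \ref{MM}.

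Le point véritablement non formel de cet argument est ce recours à la classification des surfaces (le critère de Castelnuovo assurant qu'une surface rationnellement connexe est rationnelle) : c'est là que l'on utilise la dimension $3$ de façon essentielle, puisque c'est la rationalité du facteur surfacique $Y$ qui fait passer de « non birationnellement réglé » à « non rationnel ». Le reste de la preuve n'est qu'un assemblage du théorème \ref{MM} et de la connexité rationnelle des variétés de Fano, et ne présente pas de difficulté.
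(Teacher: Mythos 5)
Votre preuve est correcte et suit essentiellement le même chemin que celle de l'article : réduction au théorème \ref{MM}, connexité rationnelle du solide de Fano en caractéristique nulle, transmission à la surface $Y$, puis rationalité de $Y$ par la théorie des surfaces. La seule différence, cosmétique, est que vous invoquez directement le critère de Castelnuovo ($q=P_2=0$) là où l'article passe par l'annulation des formes pluricanoniques, la classification (qui donne $Y$ birationnelle à $\mathbb{P}^1\times C$), puis la rationalité de la courbe $C$.
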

\begin{proof}[$\mathbf{Preuve}$]
Soit $X$ un solide de Fano birationnel \`a $\mathbb{P}^1\times Y$. Comme $X$ est de Fano et que nous sommes en caract\'eristique nulle,
$X$ est rationnellement connexe,
de sorte que $Y$ est une surface rationnellement connexe.
 Comme $Y$ est recouverte par des courbes rationnelles tr\`es libres, toute forme pluricanonique sur $Y$
est nulle. La classification des surfaces montre alors que
$Y$ est birationnelle \`a $\mathbb{P}^1\times C$ pour $C$ une courbe. Comme $Y$ est rationnellement connexe, $C$ est rationnelle, de sorte que $Y$,
donc $X$ sont rationnels. On a montr\'e, comme voulu, qu'un solide de Fano non rationnel n'est pas birationnellement r\'egl\'e.
\end{proof}

Pour certaines collections de solides de Fano \`a groupes d'automorphismes finis,
par exemple celle \'etudi\'ee dans \cite{DIM},
la question de la s\'eparation de $\mathcal{M}$ est ouverte (on sait seulement, par \cite{Beauprym} th\'eor\`eme 5.6,
qu'une vari\'et\'e g\'en\'erale dans cette famille n'est pas rationnelle, de sorte que le corollaire \ref{sepFano3} ne s'applique pas) :
\begin{question}\label{qsepDIM}
 En caract\'eristique nulle, le champ de modules $\mathcal{M}$ des solides de Fano
de nombre de Picard $1$, d'indice $1$ et de degr\'e $10$ est-il s\'epar\'e ?
\end{question}

\'Enon\c{c}ons la question g\'en\'erale que ces exemples illustrent :
\begin{question}\label{qsepFano}
 Quand un champ de modules de vari\'et\'es de Fano \`a groupes d'automorphismes finis est-il s\'epar\'e ?
\end{question}

L'objectif de ce texte est de r\'epondre \`a
la question \ref{qsepFano} dans le cas particulier o\`u $\mathcal{M}$ param\`etre des intersections compl\`etes lisses.

\subsection{\'Enonc\'es des th\'eor\`emes}

Dans tout ce texte, on travaille sur $\Spec(\mathbb{Z})$, et on fixe $N\geq 2$, $1\leq c\leq N-1$ et $2\leq d_1\leq\dots\leq d_c$ des entiers.
Par intersection compl\`ete (sur un corps $k$), on voudra dire sous-sch\'ema ferm\'e de codimension $c$ dans $\mathbb{P}^N_k$
d\'efini par $c$ \'equations homog\`enes de degr\'es $d_1,\dots,d_c$. On notera $n=N-c\geq 1$ la dimension de ces intersections compl\`etes.

Soit $H$ l'ouvert du sch\'ema de Hilbert de $\mathbb{P}^N_{\mathbb{Z}}$ param\'etrant les intersections compl\`etes lisses
(voir par exemple \cite{Sernesi} 4.6.1).
On note $\mathcal{M}$ le champ de modules des intersections
compl\`etes lisses polaris\'ees par $\mathcal{O}(1)$. Par d\'efinition, c'est le champ quotient $[PGL_{N+1}\backslash H]$.

Remarquons que, si $d_1+\ldots+d_c<N+1$, les vari\'et\'es consid\'er\'ees sont de Fano. Pour ces valeurs des param\`etres, on \'etudie donc un
cas particulier de la question \ref{qsepFano}.

\vspace{1em}

  Le premier th\'eor\`eme principal est :
               
\begin{thm}\label{DMintro}
Le champ $\mathcal{M}$ est de Deligne-Mumford sauf dans les cas suivants :

\begin{enumerate}[(i)]
 \item Si $c=1$ et $d_1=2$.

\item Si $N=2$, $c=1$, $d_1=3$, auquel cas il est de Deligne-Mumford au-dessus de $\Spec(\mathbb{Z}[\frac{1}{3}])$.

\item Si $N\geq 3$ est impair, $c=2$, $d_1=d_2=2$, auquel cas il est de Deligne-Mumford au-dessus de $\Spec(\mathbb{Z}[\frac{1}{2}])$.
\end{enumerate}
\end{thm}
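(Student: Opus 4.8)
Le plan est de traduire la propri\'et\'e de Deligne-Mumford de $\mathcal{M}=[\PGL_{N+1}\backslash H]$ en l'annulation, en chaque point g\'eom\'etrique, de l'alg\`ebre de Lie du stabilisateur. Comme $\PGL_{N+1}$ est lisse et $H$ un sch\'ema, le champ quotient $\mathcal{M}$ est toujours alg\'ebrique, et il est de Deligne-Mumford si et seulement si sa diagonale est non ramifi\'ee, c'est-\`a-dire si et seulement si, pour toute intersection compl\`ete lisse $X\subset\mathbb{P}^N_k$ sur un corps alg\'ebriquement clos $k$, le sch\'ema en groupes $\Aut(X,\mathcal{O}(1))\subset\PGL_{N+1}$ (qui est affine) est non ramifi\'e, donc fini \'etale : autrement dit si et seulement si son alg\`ebre de Lie s'annule. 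J'\'etablirais ensuite la formule cohomologique \[ \mathrm{Lie}\,\Aut(X,\mathcal{O}(1))=\Ker\!\big(H^0(X,T_X)\longrightarrow H^1(X,\mathcal{O}_X)\big), \] o\`u la fl\`eche est la contraction d'un champ de vecteurs global avec la classe $c_1(\mathcal{O}(1))\in H^1(X,\Omega^1_X)$ : un champ de vecteurs se rel\`eve en un automorphisme infinit\'esimal de la vari\'et\'e polaris\'ee exactement lorsqu'il pr\'eserve la polarisation, ce que mesure cette contraction (j'utiliserais aussi que l'immersion est projectivement normale, de sorte que le stabilisateur dans $\PGL_{N+1}$ co\"incide bien avec $\Aut(X,\mathcal{O}(1))$).

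Je distinguerais alors selon la dimension $n=N-c$. Si $n=1$, $X$ est une courbe lisse et $H^0(X,T_X)\neq 0$ impose un genre $\leq 1$ ; le genre $0$ ne fournit que les coniques, et le genre $1$ \'equivaut par adjonction \`a $\sum_i d_i=N+1$, ce qui, comme $c=N-1$ et $d_i\geq 2$, force $N\leq 3$ et ne laisse que $(N,c,d)=(2,1,(3))$ et $(3,2,(2,2))$. Dans ces deux cas $H^0(X,T_X)\cong H^0(X,\mathcal{O}_X)=k$, $H^1(X,\mathcal{O}_X)=k$, et la contraction ci-dessus est la multiplication par $\deg\mathcal{O}_X(1)=\prod_i d_i$, \'egale \`a $3$ puis \`a $4$ : son noyau est nul sauf lorsque la caract\'eristique divise ce degr\'e, d'o\`u pr\'ecis\'ement les exceptions en caract\'eristique $3$ et $2$ des points (ii) et (iii).

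Si $n\geq 2$, alors $N\geq 3$ et $H^1(X,\mathcal{O}_X)=0$, de sorte que la contraction est nulle et que la propri\'et\'e de Deligne-Mumford \'equivaut \`a l'annulation de $H^0(X,T_X)$ pour toute intersection compl\`ete lisse de ce type, en toute caract\'eristique. Via la suite normale $0\to T_X\to T_{\mathbb{P}^N}|_X\to\bigoplus_i\mathcal{O}_X(d_i)\to 0$ et l'isomorphisme de restriction $H^0(\mathbb{P}^N,T_{\mathbb{P}^N})\xrightarrow{\sim}H^0(X,T_{\mathbb{P}^N}|_X)$ (\'etabli par la suite d'Euler et le complexe de Koszul, et ind\'ependant de la caract\'eristique puisque $N\geq 3$), ceci se ram\`ene \`a l'\'enonc\'e alg\'ebrique : le seul $v\in\mathfrak{gl}_{N+1}$, \`a scalaire pr\`es, v\'erifiant $v\cdot f_i\in(f_1,\dots,f_c)$ pour tout $i$ est nul. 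Pour les quadriques ($c=1$, $d_1=2$) ce n'est jamais le cas puisque $\mathrm{Lie}\,\Aut=\mathfrak{o}_{N+1}\neq 0$, ce qui donne l'exception (i), valable en toute caract\'eristique.

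Le c\oe ur de la preuve, et sa principale difficult\'e, est alors la classification des intersections compl\`etes lisses de dimension $\geq 2$ poss\'edant un champ de vecteurs non nul, en toute caract\'eristique. Je proc\'ederais \`a la mani\`ere de Matsumura-Monsky, en analysant la structure de Jordan d'un $v\in\mathfrak{gl}_{N+1}$ non scalaire pr\'eservant l'id\'eal, pour montrer qu'un tel $v$ rend $X$ singuli\`ere, \`a la seule exception des intersections de deux quadriques. Le point le plus d\'elicat sera pr\'ecis\'ement le comportement de ces derni\`eres selon la caract\'eristique et la parit\'e de $N$ : en caract\'eristique $\neq 2$, la diagonalisation simultan\'ee du pinceau et les changements de signe fournissent un stabilisateur fini \'etale, tandis qu'en caract\'eristique $2$ le sch\'ema en groupes stabilisateur, bien que de m\^eme rang, devient non r\'eduit lorsque $N$ est impair. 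J'\'etablirais ce dernier fait soit par le calcul direct de $\{v:v\cdot q_1,v\cdot q_2\in\langle q_1,q_2\rangle\}$ en caract\'eristique $2$ (o\`u l'annulation des d\'eriv\'ees des carr\'es cr\'ee des solutions non scalaires), soit en reliant le stabilisateur \`a la $2$-torsion $J[2]$ de la jacobienne de la courbe hyperelliptique associ\'ee au pinceau, dont l'alg\`ebre de Lie vaut $\mathrm{Lie}(J)\neq 0$ en caract\'eristique $2$ ; le cas $N$ pair, o\`u la structure du pinceau diff\`ere et le stabilisateur reste \'etale, devra \^etre trait\'e \`a part.
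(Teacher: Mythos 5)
Your reduction framework is correct and matches the paper's: $\mathcal{M}=[\PGL_{N+1}\backslash H]$ is Deligne--Mumford exactly when every geometric stabilizer $\Aut_k(Z,\mathcal{O}(1))\subset\PGL_{N+1}$ is unramified, i.e.\ has trivial Lie algebra, and the identification of this stabilizer with the polarized automorphism group scheme via projective normality is the paper's paragraph \ref{autoproj}. Your treatment of the curve case is moreover a genuinely different and arguably cleaner route: the paper identifies $E\cap\Aut_k(Z,\mathcal{O}(1))$ with $E[3]$ (resp.\ $E[4]$) citing Katz--Sarnak and tests reducedness with Silverman, whereas you read the answer off the contraction map $H^0(Z,T_Z)\to H^1(Z,\mathcal{O}_Z)$ against $c_1(\mathcal{O}(1))$, which for genus $1$ is multiplication by $\deg\mathcal{O}_Z(1)=3$ (resp.\ $4$); both give non-reducedness exactly at $p=3$ (resp.\ $p=2$). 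Also, by working only with $\Aut_k(Z,\mathcal{O}(1))$ you legitimately bypass the paper's comparison $\Aut_k(Z)=\Aut_k(Z,\mathcal{O}(1))$ and all finiteness questions (for which the paper needs its separation theorem in case (v) of Theorem \ref{autoic}); unramifiedness alone suffices for the Deligne--Mumford property.

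The genuine gap is at what you yourself call the heart of the proof. For $n\geq 2$ you correctly reduce to the algebraic statement that no non-scalar $v\in\mathfrak{gl}_{N+1}$ satisfies $v\cdot f_i\in(f_1,\dots,f_c)$ for all $i$, except for quadrics and, when $p=2$ and $N$ is odd, intersections of two quadrics. But this statement \emph{is} the theorem in dimension $\geq 2$, and you do not prove it: ``in the manner of Matsumura--Monsky, analyzing the Jordan structure of $v$'' is a plan, not an argument. Matsumura--Monsky's analysis is specific to hypersurfaces; extending it to $c\geq 2$ equations of mixed degrees is not routine (there is no canonical choice of generators of the ideal compatible with $v$, a difficulty of the same nature as the one the paper points out for its separation proof), and nothing in the sketch explains how the exceptional case of two quadrics with $N$ odd and $p=2$ --- and only that case --- would fall out of the analysis. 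The paper instead proves the vanishing $H^0(Z,T_Z)=0$ cohomologically (Proposition \ref{champdev}), extending Kodaira--Spencer's computation by means of Deligne's characteristic-free vanishing theorem (Lemma \ref{Bott}), a filtration of $\Omega^{p+c}_{\mathbb{P}^N_k}|_Z$ and Koszul resolutions (Lemmas \ref{annulation}, \ref{annulationbis}, \ref{annulationter}), and it settles the remaining pencil-of-quadrics case by explicit matrix computations in a normal form of the pencil (Propositions \ref{quadrdiago} and \ref{quadrdeux}). Your two suggested strategies for that last case (direct computation in characteristic $2$, or the $2$-torsion of the Jacobian of the associated hyperelliptic curve) are plausible --- the first is essentially Proposition \ref{quadrdeux} --- but neither is executed, nor is the verification that $N$ even produces no exception (the paper's Lemma \ref{annulationter}). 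As it stands, the proposal is a correct reduction plus an unproved classification, so it does not yet constitute a proof.
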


  Ce th\'eor\`eme signifie que, sauf pour quelques exceptions, les automorphismes
projectifs d'une intersection compl\`ete lisse forment un sch\'ema en groupes fini r\'eduit. Il y a deux types d'exceptions diff\'erents : dans le cas
(i), ces groupes sont de dimension $>0$ ; dans les cas (ii) et (iii), ils sont finis non r\'eduits.

  Au vu de la remarque \ref{remautofini}, cet \'enonc\'e peut
\^etre vu comme un r\'esultat facile \`a tester indiquant que l'\'etude de la s\'eparation de $\mathcal{M}$
est int\'eressante --- sauf si $c=1$ et $d_1=2$.
C'est ce qui a motiv\'e pour nous son \'etude.
Ce n'est cependant pas un \'enonc\'e plus faible que la s\'eparation du champ
$\mathcal{M}$ car celle-ci ne dit rien sur le caract\`ere r\'eduit de ces groupes d'automorphismes.

\vspace{1em}
 
Le champ de modules des quadriques ne peut \^etre s\'epar\'e par la remarque \ref{remautofini}
car le groupe de leurs automorphismes projectifs est de dimension $>0$.
On montre que ce contre-exemple trivial est le seul :

\begin{thm}\label{sepintro}
  Le champ $\mathcal{M}$ est s\'epar\'e, sauf si $c=1$ et $d_1=2$.
\end{thm}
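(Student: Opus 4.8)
The plan is to use that $\mathcal{M}=[\PGL_{N+1}\backslash H]$ is separated if and only if the action of $\PGL_{N+1}$ on $H$ is proper, i.e. the map $\PGL_{N+1}\times H\to H\times H$, $(g,x)\mapsto(gx,x)$, is proper; since it is of finite type and $H$ is separated, this amounts to the valuative criterion. Concretely, let $R$ be a discrete valuation ring with fraction field $K$ and (after the standard reductions) complete with algebraically closed residue field $k$, let $\mathcal{X},\mathcal{X}'\subset\mathbb{P}^N_R$ be two $R$-flat families with smooth complete-intersection fibres of type $(d_1,\dots,d_c)$, and let $g\in\PGL_{N+1}(K)$ satisfy $g\cdot\mathcal{X}_K=\mathcal{X}'_K$. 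Because each $d_i\geq 2$, a smooth complete intersection of dimension $n\geq 1$ is linearly normal and its embedding is the complete linear system $|\mathcal{O}(1)|$, so every polarised isomorphism is projective; hence the content of separatedness is exactly that such a $g$ extends, i.e. that $g\in\PGL_{N+1}(R)$.

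First I would assume for contradiction that $g\notin\PGL_{N+1}(R)$ and make this explicit. Lifting $g$ to $GL_{N+1}(K)$ and rescaling by a power of a uniformiser, choose a representative $M\in\mathrm{Mat}_{N+1}(R)$ whose reduction $\bar M\in\mathrm{Mat}_{N+1}(k)$ is nonzero; the hypothesis $g\notin\PGL_{N+1}(R)$ means precisely that $\bar M$ is singular. Then $\bar M$ defines a linear projection $\pi\colon\mathbb{P}^N_k\dashrightarrow\mathbb{P}^N_k$ with nonempty centre $\Lambda=\mathbb{P}(\ker\bar M)$ and image a proper linear subspace $\Pi=\mathbb{P}(\mathrm{im}\,\bar M)$. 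The geometric task is to show that the special fibres $X=\mathcal{X}_k$ and $X'=\mathcal{X}'_k$, both smooth complete intersections, cannot be related through such a degenerate $\pi$.

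The key step is to track the degeneration on special fibres. Taking the closure in $\mathcal{X}\times_R\mathcal{X}'$ of the graph of the isomorphism $g\colon\mathcal{X}_K\xrightarrow{\sim}\mathcal{X}'_K$ produces a correspondence whose special fibre exhibits $X'$ as a flat limit of the translates $g(t)\cdot X$ as $g(t)\to\bar M$, governed on the locus where $\pi$ is defined by $\pi|_X$. I would then argue in two cases. If $X\cap\Lambda=\varnothing$, then $\pi|_X$ is a morphism and the limit forces $X'$ into the proper linear subspace $\Pi$, contradicting the nondegeneracy of the smooth complete intersection $X'$. If $X\cap\Lambda\neq\varnothing$, then $\pi$ has base points along $X$ and the flat limit acquires singular (or non-reduced, cone-like) structure coming from the points of $X\cap\Lambda$, contradicting the smoothness of $X'$. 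Either way $g$ must extend, so the action is proper and $\mathcal{M}$ is separated.

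I expect the main obstacle to be the second case: making precise, via a local analysis of the flat limit near $X\cap\Lambda$ together with the numerical invariants of complete intersections (their degree $\prod d_i$, the absence of linear subspaces of excessive dimension, and the positivity of $\mathcal{O}_X(1)$), that a singularity genuinely must appear in $X'$, and isolating the single configuration in which this mechanism fails. That failure is exactly $c=1$, $d_1=2$: a smooth quadric carries a positive-dimensional orthogonal automorphism group and therefore admits smooth limits even under genuinely degenerate transformations, and indeed by Remark \ref{remautofini} the quadric stack cannot be separated. Thus the exceptional case of the statement is precisely the locus where the singularity argument breaks down, and showing it is the only such locus is the delicate part of the proof.
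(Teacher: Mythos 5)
Your setup --- reformulating separatedness as properness of the $\PGL_{N+1}$-action on $H$, reducing to the valuative criterion, and observing that a non-extending $g$ has a representative $M\in\mathrm{Mat}_{N+1}(R)$ whose singular reduction $\bar M$ defines a linear projection $\pi$ with centre $\Lambda$ and image $\Pi$ --- coincides with the paper's (the paper moreover diagonalises $g$ by the adapted-basis theorem, so that $\pi$ becomes the projection from a coordinate subspace $L^*$ onto a coordinate subspace $P_*$; this normal form is used heavily afterwards). Your first case, $X\cap\Lambda=\varnothing$, is correct and appears in the paper's argument (a finite projection forces $X'$ into $\Pi$, contradicting la proposition \ref{propic} (v)). But the second case, which is the entire content of the theorem, is a genuine gap: you assert as an expectation that the flat limit must acquire singularities contradicting the smoothness of $X'$, and this is exactly what has to be proved --- moreover the mechanism as you state it is not the one that can work.

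Two concrete problems. First, what the geometric analysis of the degenerate limit actually yields (lemme \ref{inclusion}, proved via the tangency theorem of Zak, \cite{LazarsfeldII} 6.3.5, 6.3.6) is only that $X'$ contains the linear subspace $P_*$ (and symmetrically $P^*\subset X$); this by itself contradicts nothing, since smooth complete intersections may well contain linear subspaces. Second, the contradiction is then extracted not from the geometry of $X'$ but from its equations: one specialises suitably rescaled transforms of the low-degree equations of $X$ into nonzero equations of $X'$ and confronts them with the fact (corollaire \ref{eqlisse}) that any member of a global regular sequence cutting out $X'$ must contain $P_*$ and be smooth along it. When $d_1\geq 3$ the specialised equation is singular along $P_*$ and one concludes; but as soon as the multidegree contains a $2$ with $c\geq 2$, your ``a singularity must appear'' mechanism genuinely fails: the specialised quadric equation $F'$ can be smooth along $P_*$ (lemme \ref{degredeux} shows its monomials pair a coordinate of $P_*$ with one of $L^*$), and the paper needs two further, quite different arguments --- a pencil-of-quadrics tangency argument for $c\geq 2$, $d_1=d_2=2$, and an explicit computation for $c\geq 2$, $d_1=2$, $d_2\geq 3$ showing that the relation $G'=QF'$ would force $H'=G'-QF'=0$, contradicting the construction of $H'$. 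So the exceptional case $c=1$, $d_1=2$ is not isolated by asking where singularities fail to appear in the limit; isolating it requires precisely this equation-level case analysis, which is absent from your proposal.
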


Ce th\'eor\`eme permet d'appliquer le th\'eor\`eme de Keel et Mori \cite{KM}, pour obtenir :

\begin{cor}\label{edmgrossierintro}
  Si l'on n'a pas $c=1$ et $d_1=2$, le champ $\mathcal{M}$ admet un espace de modules grossier $M$ qui est un espace alg\'ebrique s\'epar\'e.
\end{cor}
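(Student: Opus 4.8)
The plan is to apply the Keel--Mori theorem \cite{KM}, in the form valid for algebraic stacks with finite inertia (and not merely for Deligne--Mumford stacks), which produces, for such a stack of finite type over a locally Noetherian base, a coarse moduli space that is a separated algebraic space. Concretely, I would verify the three relevant hypotheses in turn: that $\mathcal{M}$ is an algebraic stack of finite type over $\Spec(\mathbb{Z})$, that it is separated, and that its inertia stack $I_{\mathcal{M}}\to\mathcal{M}$ is finite.

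The first point is immediate from the description $\mathcal{M}=[PGL_{N+1}\backslash H]$: the scheme $H$ is an open subscheme of a Hilbert scheme, hence of finite type over the Noetherian base $\Spec(\mathbb{Z})$, and $PGL_{N+1}$ is a group scheme of finite type, so the quotient stack is algebraic and of finite type over $\Spec(\mathbb{Z})$.

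For the two remaining points I would invoke the previous theorems, having excluded the case $c=1$, $d_1=2$. Separatedness of $\mathcal{M}$ is exactly Theorem \ref{sepintro}. For the finiteness of the inertia I would argue as follows: since $\mathcal{M}$ is separated, its diagonal $\Delta\colon\mathcal{M}\to\mathcal{M}\times\mathcal{M}$ is proper, and as the inertia morphism $I_{\mathcal{M}}\to\mathcal{M}$ is obtained from $\Delta$ by base change along itself, it is proper as well. On the other hand, by Theorem \ref{DMintro} --- more precisely by the fact recorded just after its statement that, outside case (i), the automorphism group schemes of smooth complete intersections are finite --- the fibres of $I_{\mathcal{M}}\to\mathcal{M}$ are finite. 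A proper quasi-finite morphism being finite, the inertia $I_{\mathcal{M}}\to\mathcal{M}$ is finite. The Keel--Mori theorem then yields the coarse space $M$ as an algebraic space, which is separated because $\mathcal{M}$ is.

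The one point where I expect to have to be careful, rather than a genuine obstacle, is that in cases (ii) and (iii) of Theorem \ref{DMintro} the stack $\mathcal{M}$ is not Deligne--Mumford over the exceptional prime (the automorphism group schemes being finite but non-reduced there), so the classical formulation of Keel--Mori for Deligne--Mumford stacks does not literally apply over all of $\Spec(\mathbb{Z})$. This is precisely why I insist on the version of the theorem for stacks with finite inertia. All the substantive input needed to meet its hypotheses, namely separatedness and the finiteness of the automorphism group schemes, has already been established in Theorems \ref{sepintro} and \ref{DMintro}, so no further geometric work is required and the corollary follows formally.
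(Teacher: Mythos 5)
Your proof is correct and follows essentially the same route as the paper, which deduces the corollary from Theorem \ref{sepintro} by invoking the Keel--Mori theorem \cite{KM} (whose natural formulation is indeed for stacks with finite inertia, not just Deligne--Mumford stacks). The paper leaves the verification of the hypotheses implicit; your check of finite inertia via properness of the diagonal plus finiteness of the automorphism group schemes is exactly the intended filling-in of that one-line argument.
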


Il est alors naturel d'\'etudier l'espace alg\'ebrique $M$ : est-ce un sch\'ema, un sch\'ema quasi-projectif ? Cette question est discut\'ee, et r\'esolue
dans des cas particuliers dans \cite{Oolqp}.

\vspace{1em}

Les deux parties de ce texte sont consacr\'ees aux preuves respectives des th\'eor\`emes \ref{sepintro} (th\'eor\`eme \ref{principal}) et
\ref{DMintro} (th\'eor\`eme \ref{icDM}). On renvoie \`a ces parties pour une discussion
plus pr\'ecise de ces \'enonc\'es, des cas particuliers d\'ej\`a connus...

%Dans la premi\`ere partie, on \'etudie la s\'eparation du champ de modules $\mathcal{M}$. L'objectif en est la preuve du
%th\'eor\`eme \ref{sepintro} (th\'eor\`eme \ref{principal}). On fait le lien entre ce r\'esultat et une conjecture de Pukhlikov.

%Dans la deuxi\`eme partie, on \'etudie le sch\'ema en groupes des automorphismes des intersections
%compl\`etes lisses polaris\'ees par $\mathcal{O}(1)$. 
%Le r\'esultat principal est le th\'eor\`eme \ref{autoic} ; on montre notamment que,
%\`a quelques exceptions pr\`es, ce sch\'ema en groupes est fini et r\'eduit.
%Ce th\'eor\`eme g\'en\'eralise au cas des intersections compl\`etes
%un th\'eor\`eme de Kodaira et Spencer sur les hypersurfaces complexes
%(voir la discussion en \ref{cashyp}).
%On en d\'eduit au paragraphe \ref{partieDM} une preuve du th\'eor\`eme \ref{DMintro} (th\'eor\`eme \ref{icDM}).

Remarquons que, contrairement \`a ce qu'on a laiss\'e entendre plus haut, et qui aurait \'et\'e plus naturel,
l'\'etude de la s\'eparation de $\mathcal{M}$ pr\'ec\`ede ici celle des groupes d'automorphismes. La raison pour cela est que,
faute d'argument plus simple, on d\'eduira le th\'eor\`eme \ref{autoic}
quand $N\geq 5$ est impair, $c=2$, $d_1=d_2=2$ et $\car(k)=2$ de la s\'eparation de $\mathcal{M}$.

\section{S\'eparation}

Introduisons quelques notations. Un trait $T$ est le spectre d'un anneau de valuation discr\`ete.
Si $T$ est un trait, on notera toujours $\eta$ son point g\'en\'erique et $s$ son point sp\'ecial,
$R$ l'anneau de valuation discr\`ete dont il est le spectre, $K$ le corps de fractions de $R$, $k$ son corps r\'esiduel, $t$ une uniformisante,
 $v$ la valuation et $\pi:R\to k$ la sp\'ecialisation.

\subsection{Introduction}

\subsubsection{\'Enonc\'e du th\'eor\`eme}

On va montrer dans cette partie le th\'eor\`eme suivant :

\begin{thm}\label{principal}
  Le champ $\mathcal{M}$ est s\'epar\'e, sauf si $c=1$ et $d_1=2$.
\end{thm}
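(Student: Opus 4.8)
The plan is to verify the valuative criterion of properness for the diagonal of $\mathcal{M}=[\PGL_{N+1}\backslash H]$. Fix a trait $T$. After a finite extension of $T$ the relevant $\PGL_{N+1}$-torsors trivialize (their classes live in a Brauer group that vanishes over a strictly henselian base), so an object of $\mathcal{M}(T)$ becomes an honest family, i.e. a section $T\to H$, and we are reduced to the following situation: given two families $X_1,X_2\subset\mathbb{P}^N_R$ of smooth complete intersections and an element $g_\eta\in\PGL_{N+1}(K)$ with $g_\eta\cdot X_{1,\eta}=X_{2,\eta}$, I must show that (up to a further finite extension) $g_\eta$ extends to $g\in\PGL_{N+1}(R)$, which then automatically satisfies $g\cdot X_1=X_2$. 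Uniqueness of the extension — the part of properness beyond universal closedness — is immediate, since $\PGL_{N+1}$ is affine, hence separated, and $\eta$ is dense in $T$. Thus the whole content lies in the \emph{existence} of the extension, which the theorem asserts holds except for quadrics.

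To analyze a putative non-extending $g_\eta$, I would lift it to a primitive $A\in GL_{N+1}(K)$ and apply the Smith normal form over the discrete valuation ring $R$: write $A=U\,\mathrm{diag}(t^{a_0},\dots,t^{a_N})\,V$ with $U,V\in GL_{N+1}(R)$ and $0=a_0\le\cdots\le a_N$. Since $U,V$ reduce to invertible matrices modulo $t$, they extend to $\PGL_{N+1}(R)$ and can be absorbed as coordinate changes on source and target preserving the smoothness of the special fibres $X_{1,s},X_{2,s}$. Hence I may assume $g_\eta=\lambda_a:=[\mathrm{diag}(t^{a_i})]$ is a one-parameter subgroup, and $g_\eta$ extends precisely when all $a_i$ coincide, i.e. when $\lambda_a$ is central. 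Assuming $a_N\ge 1$ for contradiction, the task becomes: a non-central one-parameter degeneration cannot carry one smooth complete intersection to another, save in the quadric case.

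I would extract the contradiction from the defining equations. For each degree $e\in\{d_1,\dots,d_c\}$ the forms of degree $e$ vanishing on $X_i$ form a free $R$-module by flatness, and $\lambda_a$ identifies these modules over $K$; choosing primitive bases and clearing denominators yields relations $\lambda_a^{\ast}(\text{equations of }X_2)=C\cdot(\text{equations of }X_1)$ with $C$ a matrix over $R$ and a scaling factor $\lambda\in K^{\ast}$. Reducing modulo $t$ and bookkeeping the $a$-weights of monomials shows that the reductions cutting out $X_{1,s}$ and $X_{2,s}$ are supported in low- and high-weight ranges respectively, overlapping only in a common extreme weight. The two endpoints are clean and dispose of every multidegree at once: if $v(\lambda)=0$, then the reduction of the equation of $X_1$ depends only on $\bar A x$, so $X_{1,s}$ is a cone with vertex $\mathbb{P}(\Ker\bar A)$ and is singular there; the opposite extreme makes $X_{2,s}$ such a cone. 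Both contradict smoothness as soon as some $d_i\ge 2$, which always holds here.

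The main obstacle is the intermediate regime, where the weight-graded pieces of the two reductions interlock and no cone point is directly visible. Here the plan is to play smoothness of $X_{1,s}$ and $X_{2,s}$ off against the coordinate linear subspaces singled out by the weights of $\lambda_a$: a non-central $\lambda_a$ forces one fibre to contain the coordinate linear space spanned by its extreme-weight coordinates, whose dimension I would bound below in terms of $N$, $c$ and the $a_i$, while smoothness of a complete intersection of dimension $n=N-c$ caps the dimension of any linear subspace it contains by $\lfloor n/2\rfloor$. Balancing these two estimates — uniformly over all multidegrees, and when $c\ge 2$ simultaneously over the several defining equations — should force $a$ to be central. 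The only case where the estimates fail to close is $c=1$, $d_1=2$: there the hyperbolic degeneration $x_0x_N+\cdots$ keeps the quadric smooth and $\lambda_a$ genuinely does not extend, in accordance with Remarque \ref{remautofini} since smooth quadrics have positive-dimensional automorphism groups. Carrying out this intermediate weighted-degree analysis, and making the linear-subspace bounds sharp enough to isolate the quadric as the unique exception, is where the real work lies.
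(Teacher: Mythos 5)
Your reduction is sound as far as it goes: lifting $g_\eta$, applying Smith normal form over $R$, absorbing the unimodular factors, and reducing to a diagonal $\mathrm{diag}(t^{a_0},\dots,t^{a_N})$ is exactly the paper's lemme \ref{ba}, and your remark that only existence of the extension is at stake is correct. But from there on your text is a plan rather than a proof, and the tool you propose for the ``intermediate regime'' cannot close the argument. You want a contradiction from two estimates: a lower bound on the dimension of the extreme-weight coordinate space contained in a special fibre, against the classical upper bound $\lfloor n/2\rfloor$ for linear subspaces of a smooth complete intersection. This fails for two reasons. First, the extreme-weight spaces $P_*=\{X_{p_*+1}=\dots=X_N=0\}$ and $P^*=\{X_0=\dots=X_{N-p^*-1}=0\}$ admit no lower bound in terms of $N$, $c$, $a$: they are both single points whenever the minimal and maximal weights are each attained by one coordinate (e.g. $a_i=i$), so for such weight vectors no dimension count is ever violated, whatever the multidegree. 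Second, even the containment itself does not follow from weight bookkeeping: the specialization of equations ($F\mapsto F'$ in the paper's notation) is not linear in $F$, so the specialized equations of $Z_s$ need not generate the ideal of $Z'_s$, and you cannot conclude $P_*\subset Z'_s$ from the fact that each specialized equation vanishes on $P_*$. This non-linearity is precisely the Pukhlikov-type difficulty for $c\ge 2$ that makes the statement hard.

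What is missing is the actual core of the paper's proof. The containment $P_*\subset Z'_s$ (and symmetrically $P^*\subset Z_s$) is the geometric lemme \ref{inclusion}: it is proved by studying the specialized map $f_s$ (projection from $L^*$ onto $P_*$) and excluding positive-dimensional fibres by Zak-type theorems on tangencies (\cite{LazarsfeldII} 6.3.5, 6.3.6: no hyperplane can be tangent to $Z_s$ along a subvariety of dimension $\geq 1$) --- an input nowhere in your proposal. Moreover the final contradiction is not dimensional but infinitesimal: by \ref{propic} (viii), every member of a regular sequence defining $Z'_s$ must be \emph{smooth} along $P_*$ (corollaire \ref{eqlisse}), whereas the weight analysis shows that specialized equations of degree $\geq 3$ are \emph{singular} along $P_*$ (lemme \ref{degretrois}); the case $d_1=d_2=2$ needs a separate pencil-of-quadrics tangency argument, and the mixed case $c\ge 2$, $d_1=2$, $d_2\ge 3$ needs a further trick (replace $G$ by $H=G-QF$ and show its specialization $H'$ would have to vanish identically, which is impossible by construction). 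None of this case analysis, nor the key geometric lemma it rests on, is supplied by your sketch; as you concede yourself, the part you leave open is exactly where the proof lives.
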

  On peut reformuler plus concr\`etement le th\'eor\`eme \ref{principal}. C'est sous la derni\`ere forme (iii) que nous le d\'emontrerons.

\begin{lemme}
 Les assertions suivantes sont \'equivalentes :
\begin{enumerate}[(i)]
 \item Le champ $\mathcal{M}$ est s\'epar\'e.
 \item Le groupe $PGL_{N+1}$ agit proprement sur $H$.
 \item Pour tout trait $T$ \`a corps r\'esiduel alg\'ebriquement clos, si $Z,Z'\subset \mathbb{P}^N_T$ sont des sous-$T$-sch\'emas ferm\'es 
plats sur $T$
dont les fibres g\'eom\'etriques sont des intersections compl\`etes lisses, tout automorphisme
$f_\eta : \mathbb{P}^N_{\eta} \to \mathbb{P}^N_{\eta}$ tel que $f_\eta(Z_{\eta})=Z'_{\eta}$ se prolonge en un 
automorphisme $f:\mathbb{P}^N_T\to\mathbb{P}^N_T$ tel que $f(Z)=Z'$.
\end{enumerate}
\end{lemme}

\begin{proof}[$\mathbf{Preuve}$]
Consid\'erons le diagramme $2$-cart\'esien
ci-dessous, o\`u l'on a not\'e $\Delta$ le morphisme diagonal de $\mathcal{M}$ et $P:H\to\mathcal{M}$ la pr\'esentation canonique de $\mathcal{M}$ :
\begin{equation}\label{diagonalecartesien}
\xymatrix  {
PGL_{N+1}\times H\ar[d]^{p_2\circ P}\ar[r]^{\hspace{1em}(\sigma,p_2)}  & H\times H\ar[d]^{(P,P)}
\\
\mathcal{M}\ar[r]^{\Delta}&\mathcal{M}\times \mathcal{M}}
\end{equation}
Comme $P$ est lisse et surjectif, $(P,P)$ est \'egalement lisse et surjectif, donc un recouvrement fppf.
Par cons\'equent, par \cite{LMB} 7.11.1, $(\sigma,p_2)$ est propre 
si et seulement si $\Delta$ l'est. Enfin, par \cite{LMB} 7.7, $\Delta$ est propre
si et seulement si $\mathcal{M}$ est s\'epar\'e.
Cela montre l'\'equivalence entre les deux premi\`eres assertions. 

L'\'equivalence entre les deux derni\`eres
est exactement le crit\`ere valuatif de propret\'e appliqu\'e au morphisme $PGL_{N+1}\times H\to H\times H$ 
(on peut se restreindre aux corps r\'esiduels alg\'ebriquement clos par la remarque 7.3.9 (i) de \cite{EGA2}). 
\end{proof}

On a d\'ej\`a vu qu'il \'etait n\'ecessaire d'exclure le cas des quadriques par la remarque \ref{remautofini}.
On peut aussi proposer un contre-exemple explicite \`a (iii) : on prend $R=k[[t]]$, $Z=Z'$ d\'efinis par l'\'equation
$X_0X_N+Q(X_1,\ldots,X_{N-1})=0$ o\`u $Q$ est une forme quadratique ordinaire sur $k$, et on choisit pour $f_\eta$ 
l'automorphisme de $\mathbb{P}^N_{\eta}$ donn\'e par l'\'equation 
$f_{\eta}([x_0:\ldots:x_N])=[t^{-1}x_0:x_1:\ldots:x_{N-1}:tx_N]$. 

\vspace{1em}

Enfin, plusieurs cas du th\'eor\`eme \ref{principal} sont d\'ej\`a connus.

Le cas $c=1$ et $d_1\geq 3$ sous sa forme (ii) est cons\'equence de \cite{GIT} Corollary 2.5 appliqu\'e \`a \cite{GIT} Proposition 4.2. 
Dans cette r\'ef\'erence, la base est un
corps de caract\'eristique nulle, mais ces arguments fonctionnent sur une base quelconque (par exemple $\Spec(\mathbb{Z})$) comme expliqu\'e dans \cite{Seshadri}.

Le cas $d_1+\ldots+d_c\geq N+1$ sous sa forme (iii) est cons\'equence du th\'eor\`eme \ref{MM} de Matsusaka et Mumford.
En effet, le fibr\'e canonique des intersections compl\`etes 
\'etant $\mathcal{O}(d_1+\ldots+d_c-N-1)$, il a des sections globales sous cette hypoth\`eses ; cela emp\^eche les intersections compl\`etes
consid\'er\'ees d'\^etre birationnellement r\'egl\'ees.

\subsubsection{Conjecture de Pukhlikov}

Dans \cite{Pukhli}, Pukhlikov, motiv\'e par des questions de g\'eom\'etrie birationnelle, consid\`ere l'\'enonc\'e ci-dessous plus g\'en\'eral que (iii) :
\begin{enumerate}[(i)]\setcounter{enumi}{3}
 \item Pour tout trait $T$ \`a corps r\'esiduel alg\'ebriquement clos, si $Z,Z'\subset \mathbb{P}^N_T$ sont des sous-$T$-sch\'emas ferm\'es r\'eguliers et
plats sur $T$
dont les fibres g\'eom\'etriques sont des intersections compl\`etes, tout automorphisme
$f_\eta : \mathbb{P}^N_{\eta} \to \mathbb{P}^N_{\eta}$ tel que $f_\eta(Z_{\eta})=Z'_{\eta}$ se prolonge en un 
automorphisme $f:\mathbb{P}^N_T\to\mathbb{P}^N_T$ tel que $f(Z)=Z'$.
\end{enumerate}

Pour \^etre pr\'ecis, Pukhlikov travaille avec $n\geq 2$ et $R=\mathbb{C}[[t]]$. 
Il montre (iv) si $c=1$ et $d_1\geq 3$, et il conjecture que,
\`a $c$ fix\'e, si $N$ est grand et quitte \`a exclure un nombre fini de multidegr\'es, (iv) est vrai.

Le cas $c\geq 2$, m\^eme dans le cas abord\'e ici o\`u $Z$ et $Z'$ sont lisses sur $T$, fait appara\^itre la difficult\'e 
suivante par rapport au cas $c=1$ trait\'e par Pukhlikov : il n'y a aucune raison pour qu'on puisse trouver $c$ 
\'equations $F_1,\ldots,F_c$ de
$Z_\eta$ se sp\'ecialisant en $c$ \'equations d\'efinissant $Z_s$ telles que $f_\eta^{-1*}F_1,\ldots,f_\eta^{-1*}F_c$ 
se sp\'ecialisent en $c$ \'equations d\'efinissant $Z'_s$. Cela emp\^eche de montrer (iii) en comparant directement les \'equations
de $Z_s$ et $Z'_s$.

%Enfin, certains des arguments de ce texte recoupent ceux de Pukhlikov. Le lemme \ref{ba} est le Lemma 2 de \cite{Pukhli} 
%et les arguments du paragraphe 5 de \cite{Pukhli} se retrouvent dans le lemme \ref{degretrois}.

\subsubsection{Plan de la preuve}

  Cette partie est organis\'ee comme suit. Dans le deuxi\`eme paragraphe, on regroupe tous les r\'esultats standards sur les intersections compl\`etes qui seront utiles
par la suite. Les deux paragraphes suivants sont consacr\'es \`a une preuve par l'absurde du th\'eor\`eme \ref{principal} sous sa forme (iii) ci-dessus.
Dans le troisi\`eme paragraphe, le lemme \ref{inclusion} fournit, \`a l'aide d'arguments g\'eom\'e\-triques, des restrictions a priori sur $Z_s$ et $Z'_s$.
Dans le quatri\`eme paragraphe, on conclut en manipulant de mani\`ere explicite les \'equations de petit degr\'e de $Z_s$ et $Z'_s$. On distingue pour cela 
trois cas : le cas $d_1\geq 3$ est trait\'e au paragraphe \ref{paragtrois}, le cas $c\geq 2$, $d_1=d_2=2$ au paragraphe \ref{paragdeux}, et 
le cas $c\geq 2$, $d_1=2$, $d_2\geq 3$, plus d\'elicat, au paragraphe \ref{paragfin}.

\subsection{G\'en\'eralit\'es sur les intersections compl\`etes}

On rassemble ici des r\'esultats standards sur les intersections compl\`etes pour pouvoir y faire r\'ef\'erence par la suite. Dans ce paragraphe, 
et dans ce paragraphe seulement, on autorise $c=0$.

Si $Z$ est une intersection compl\`ete, une \og suite r\'eguli\`ere globale\fg{} d\'efinissant $Z$ est la donn\'ee de $c$ \'equations homog\`enes la
d\'efinissant.

\begin{prop}\label{propic}
 
Soit $k$ un corps alg\'ebriquement clos et $Z\subset\mathbb{P}^N_k$
une intersection compl\`ete d\'efinie par des \'equations $F_1,\ldots,F_c$. On note $\mathcal{I}_Z$ son faisceau
d'id\'eaux.

\begin{enumerate}[(i)]
 \item Les \'equations $F_1,\ldots,F_c$ sont une suite r\'eguli\`ere ; $Z$ est Cohen-Macaulay.

\item Si $0<q<n$ et $d\in\mathbb{Z}$, $H^q(Z,\mathcal{O}_Z(d))=0$.
\item Si $d\in\mathbb{Z}$, la fl\`eche de restriction $H^0(\mathbb{P}^N_k,\mathcal{O}(d))\to H^0(Z,\mathcal{O}_Z(d))$ est surjective. 
Son noyau, \'egal \`a $H^0(\mathbb{P}^N_k,\mathcal{I}_Z(d))$, est constitu\'e
des polyn\^omes homog\`enes de la forme $\sum_i Q_i F_i$, $Q_i\in H^0(\mathbb{P}^N_k,\mathcal{O}(d-d_i))$. La
dimension de ce noyau ne d\'epend que de $N$, $c$, $d_1,\ldots,d_c$ et $d$.
\item La sch\'ema $Z$ est connexe. Si $Z$ est lisse, $Z$ est int\`egre.
\item Le sous-sch\'ema $Z$ n'est sch\'ematiquement inclus dans aucun hyperplan de $\mathbb{P}^N_k$.
\item Toute base de $H^0(\mathbb{P}^N_k,\mathcal{I}_Z(d_1))$ peut \^etre compl\'et\'ee en une suite r\'eguli\`ere globale d\'efinissant $Z$.
\item Si $F\in H^0(\mathbb{P}^N_k,\mathcal{I}_Z(d))$ ne s'\'ecrit pas $\sum_i Q_i F_i$ o\`u la somme porte sur les $i$ tels que $d_i<d$, 
$F$ fait partie d'une suite r\'eguli\`ere globale d\'efinissant $Z$.
\item Si $Z$ est lisse, les vari\'et\'es $\{F_i=0\}$ sont lisses et transverses en tout point de $Z$.

\end{enumerate}

\end{prop}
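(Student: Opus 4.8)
The plan is to derive every item from the single fact that $F_1,\ldots,F_c$ form a regular sequence, together with the Koszul resolution it produces. For (i) I would work in the graded Cohen--Macaulay ring $S=k[X_0,\ldots,X_N]$: since $Z$ has codimension $c$ and is cut out by exactly $c$ equations, the ideal $(F_1,\ldots,F_c)$ has height $c$, and in a Cohen--Macaulay ring grade equals height, so any $c$ homogeneous elements generating a height-$c$ ideal form a regular sequence. The quotient of a Cohen--Macaulay ring by a regular sequence is again Cohen--Macaulay, whence $Z$ is Cohen--Macaulay.

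Once (i) holds, the Koszul complex on $F_1,\ldots,F_c$ is a locally free resolution
\[
0\to\mathcal{O}\Big(-\sum_i d_i\Big)\to\cdots\to\bigoplus_{i<j}\mathcal{O}(-d_i-d_j)\to\bigoplus_i\mathcal{O}(-d_i)\to\mathcal{O}\to\mathcal{O}_Z\to 0
\]
of $\mathcal{O}_Z$ on $\mathbb{P}^N_k$. For (ii) and (iii) I would split this into short exact sequences with syzygy sheaves $K_p$ and climb the resulting long exact cohomology sequences after twisting by $\mathcal{O}(d)$. The key input is that $H^q(\mathbb{P}^N_k,\mathcal{O}(m))=0$ for $0<q<N$ and all $m$, so each term of the complex contributes nothing in the intermediate range; dimension shifting then identifies $H^q(Z,\mathcal{O}_Z(d))$ and $H^1(\mathbb{P}^N_k,\mathcal{I}_Z(d))$ with a cohomology group of the last term $\mathcal{O}(d-\sum_i d_i)$, which vanishes because the length $c$ of the complex satisfies $c\le N-1<N$. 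This yields both the vanishing of (ii) for $0<q<n$ and the surjectivity of restriction in (iii); the same chase run one step earlier shows $\bigoplus_i H^0(\mathcal{O}(d-d_i))\to H^0(\mathcal{I}_Z(d))$ is surjective, giving the description of the kernel as $\sum_i Q_iF_i$. The dimension statement follows since $h^0(\mathcal{O}(d))$ and $h^0(\mathcal{O}_Z(d))$ are determined by the numerical data, so $h^0(\mathcal{I}_Z(d))$ is their difference. I expect this cohomological bookkeeping, and in particular pinning down the ranges of $q$ so that the top term stays invisible, to be the only substantial step.

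Parts (iv)--(vii) are then formal. For (iv), taking $d=0$ in (iii) gives $H^0(\mathcal{O}_Z)=k$ because $H^0(\mathcal{I}_Z)=0$ (the $Q_i$ would have negative degree $-d_i$), so $Z$ is connected; if $Z$ is smooth it is normal, hence a disjoint union of integral schemes, and connectedness forces integrality. For (v), taking $d=1$ in (iii) gives $H^0(\mathcal{I}_Z(1))=0$ since $1-d_i<0$, so no hyperplane contains $Z$ schematically. For (vi), the degree-$d_1$ part of the ideal is exactly the span of the $F_i$ with $d_i=d_1$ (these are linearly independent, else $I_Z$ would be generated by fewer than $c$ elements, contradicting its height); replacing them by any basis leaves the ideal and the multidegree unchanged, so by the height criterion of (i) one still has a regular sequence defining $Z$. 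Part (vii) is the same change-of-generators argument: if $F$ is not a combination of the $F_i$ with $d_i<d$, then in its expansion $\sum_{d_i\le d}Q_iF_i$ the constant-coefficient part $\sum_{d_i=d}\lambda_iF_i$ is nonzero, so $F$ may be swapped for some $F_j$ with $d_j=d$ while preserving the ideal and the list of degrees, again giving a regular sequence by (i).

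Finally, (viii) is a local computation via the Jacobian criterion. At a point $z\in Z$, smoothness of $Z$ of codimension $c$ is equivalent to the differentials $d_zF_1,\ldots,d_zF_c$ being linearly independent in the cotangent space of $\mathbb{P}^N_k$ at $z$. Linear independence forces each $d_zF_i\neq 0$, so every hypersurface $\{F_i=0\}$ is smooth at $z$, and it forces the hyperplanes $\ker d_zF_i$ in the tangent space to meet in codimension exactly $c$, which is transversality. Thus the only real work in the whole proposition is the dimension shift of (ii)--(iii); everything else is either an immediate corollary or standard local algebra.
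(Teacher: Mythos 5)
Your proof is correct, but it reaches the substantial items (i)--(iii) by a genuinely different route from the paper. For (i), the paper argues geometrically, by induction on the partial intersections $\{F_1=\cdots=F_i=0\}$: the Hauptidealsatz forces each of them to have pure codimension $i$, Cohen--Macaulayness rules out embedded points, so $F_{i+1}$ is a non-zerodivisor, and local complete intersections are Cohen--Macaulay; you instead invoke the standard commutative-algebra fact that in a Cohen--Macaulay ring, $c$ elements generating a height-$c$ ideal form a regular sequence. For (ii)--(iii), the paper never writes down the Koszul resolution: it peels off one equation at a time via the short exact sequences $0\to\mathcal{O}_{Z_{c-1}}(d-d_c)\to\mathcal{O}_{Z_{c-1}}(d)\to\mathcal{O}_{Z_c}(d)\to 0$ and induction on $c$, with a diagram chase for the description of the kernel; you resolve $\mathcal{O}_Z$ by the full Koszul complex and dimension-shift through the syzygy sheaves. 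Both mechanisms reduce everything to the vanishing of intermediate cohomology of line bundles on $\mathbb{P}^N_k$, and your index bookkeeping is right (the shift ends at $H^{q+c}$ with $q+c<N$ because $q<n$, and at $H^c$ with $c\le N-1<N$ for the surjectivity). Your version is more uniform---one resolution yields the vanishing, the surjectivity and the kernel description simultaneously, and it is the same mechanism the paper itself deploys later for twisted differential forms (the Koszul spectral sequence in the automorphism section)---while the paper's induction is more elementary and self-contained. Two remarks. First, in (iii) you assert that $h^0(Z,\mathcal{O}_Z(d))$ is determined by the numerical data without saying why; as stated this is circular, but it is a one-line fix with your own tools: the same shift gives $H^1$-vanishing for each syzygy sheaf, so their $h^0$'s, hence $h^0(\mathbb{P}^N_k,\mathcal{I}_Z(d))$, are computed by descending recursion from the Koszul terms, and $h^0(Z,\mathcal{O}_Z(d))$ is then the difference. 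Second, in (vi) you are actually more careful than the paper: you justify that the $F_i$ of degree $d_1$ are linearly independent (via Krull's height theorem), so that a basis of $H^0(\mathbb{P}^N_k,\mathcal{I}_Z(d_1))$ has the right cardinality; the paper leaves this implicit. Items (iv), (v), (vii), (viii) coincide with the paper's arguments.
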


\begin{proof}[$\mathbf{Preuve}$]

\begin{enumerate}[(i)]
 \item 

Posons $Z_i=\{F_1=\ldots=F_i=0\}$. Par Hauptidealsatz, $Z_{i+1}$ est de codimension au plus $1$ dans $Z_i$, de sorte que, comme $Z=Z_c$ est de codimension $c$ dans 
$\mathbb{P}^N_k=Z_0$, $Z_{i}$ est n\'ecessairement de codimension pure $i$ dans $\mathbb{P}^N_k$. Montrons alors par r\'ecurrence sur $0\leq i\leq c$ que
$F_1,\ldots,F_i$ forment une suite r\'eguli\`ere et que $Z_i$ est Cohen-Macaulay. Pour $i=0$, c'est \'evident. Supposons-le vrai pour $i$. Alors $F_{i+1}$ 
ne s'annule pas sur une composante irr\'eductible de $Z_i$ car $Z_{i+1}$ est de codimension $1$ dans $Z_i$. Il ne s'annule pas non plus sur un point immerg\'e de $Z_i$
car $Z_i$, Cohen-Macaulay, n'en a pas. Ainsi, $F_{i+1}$ n'est pas un diviseur de z\'ero sur $Z_i$ de sorte que $F_1,\ldots,F_{i+1}$ forment une suite r\'eguli\`ere.
On en d\'eduit que $Z_{i+1}$ est localement intersection compl\`ete, donc Cohen-Macaulay. Cela conclut la r\'ecurrence ; on obtient l'\'enonc\'e voulu
en faisant $i=c$.

\item On montre ceci par r\'ecurrence sur $c$, le cas $c=0$ \'etant connu. Comme, par (i), $F_c$ n'est pas un diviseur de z\'ero sur $Z_{c-1}$, la
multiplication par $F_c$ induit une suite exacte courte de 
faisceaux sur $Z_{c-1}$ : $0\to \mathcal{O}_{Z_{c-1}}(d-d_c)\to \mathcal{O}_{Z_{c-1}}(d)\to\mathcal{O}_{Z_{c}}(d)\to 0$. En \'ecrivant la suite exacte longue
de cohomologie associ\'ee et en utilisant les annulations donn\'ees par l'hypoth\`ese de r\'ecurrence appliqu\'ee \`a $Z_{c-1}$, on obtient les annulations
d\'esir\'ees.

\item

Que le noyau de cette fl\`eche de restriction soit $H^0(\mathbb{P}^N_k,\mathcal{I}_Z(d))$ r\'esulte de la suite longue de cohomologie associ\'ee
\`a la suite exacte courte $0\to\mathcal{I}_Z\to\mathcal{O}_{\mathbb{P}^N_k}\to\mathcal{O}_Z\to 0$.

Montrons la surjectivit\'e de l'application de restriction. On raisonne par r\'ecurrence sur $c$ et on utilise la suite exacte longue de cohomologie suivante :
$0\to H^0(Z_{c-1},\mathcal{O}(d-d_c))\to H^0(Z_{c-1},\mathcal{O}(d))\to\ H^0(Z_{c},\mathcal{O}(d))\to 
H^1(Z_{c-1},\mathcal{O}(d-d_c))$.
Par (ii) le $H^1$ s'annule, ce qui permet de conclure.

En particulier, on a une suite exacte courte : $0\to H^0(\mathbb{P}^N_k,\mathcal{I}_Z(d))\to H^0(\mathbb{P}^N_k,\mathcal{O}_{\mathbb{P}^N_k}(d))\to
 H^0(Z,\mathcal{O}_Z(d))\to 0$. Pour montrer que $h^0(\mathbb{P}^N_k,\mathcal{I}_Z(d))$ ne d\'epend pas que de $N$, $c$, $d_1,\ldots,d_c$ et $d$,
il suffit de montrer que c'est le cas de $h^0(Z,\mathcal{O}_Z(d))$. Cela se montre par r\'ecurrence sur $c$ en utilisant \`a nouveau
la suite exacte 
$0\to H^0(Z_{c-1},\mathcal{O}(d-d_c))\to H^0(Z_{c-1},\mathcal{O}(d))\to\ H^0(Z_{c},\mathcal{O}(d))\to 0$.

Il reste \`a d\'ecrire les polyn\^omes homog\`enes inclus dans ce noyau. On raisonne encore par r\'ecurrence sur $c$, et on proc\`ede par chasse au diagramme
dans le diagramme commutatif 
ci-dessous o\`u l'on a vu que la deuxi\`eme ligne est exacte, o\`u les fl\`eches verticales sont surjectives de noyau connu par hypoth\`ese de 
r\'ecurrence et o\`u les fl\`eches horizontales de gauche sont donn\'ees
par la multiplication par $F_c$.

\begin{equation*}
\xymatrix @C=2mm @R=3mm {
&H^0(\mathbb{P}^N_k,\mathcal{O}(d-d_c))\ar[r]\ar[d]
&H^0(\mathbb{P}^N_k,\mathcal{O}(d))\ar[d]&        \\
0\ar[r]& H^0(Z_{c-1},\mathcal{O}(d-d_c))\ar[r]&H^0(Z_{c-1},\mathcal{O}(d))\ar[r]
&H^0(Z_{c},\mathcal{O}(d)) \ar[r] & 0                 
}
\end{equation*}

\item On applique (iii) avec $d=0$. Il vient $H^0(Z,\mathcal{O}_Z)=k$ : $Z$ est bien connexe. Si $Z$ est lisse et connexe, elle est int\`egre.
\item On applique (iii) avec $d=1$ : la description explicite de $H^0(\mathbb{P}^N_k,\mathcal{I}_Z(1))$ montre qu'il est nul de sorte que 
$H^0(\mathbb{P}^N_k,\mathcal{O}(1))\to H^0(Z,\mathcal{O}_Z(1))$ est injective, ce qu'on voulait.
\item On applique (iii) avec $d=d_1$ : $H^0(\mathbb{P}^N_k,\mathcal{I}_Z(d_1))$ est constitu\'e des combinaisons lin\'eaires \`a coefficients dans $k$ des 
$F_i$ qui sont de degr\'e $d_1$. Si l'on remplace
les $F_i$ de degr\'e $d_1$ par une autre base de $H^0(\mathbb{P}^N_k,\mathcal{I}_Z(d_1))$, on obtient une nouvelle collection de $c$ \'equations d\'efinissant $Z$.
\item Par (iii), on peut \'ecrire $F=\sum_i Q_iF_i$ et il existe $i$ tel que $d=d_i$ et $Q_i$ est un scalaire non nul. Alors, en rempla\c{c}ant $F_i$ par $F$, 
on obtient on obtient une nouvelle collection de $c$ \'equations d\'efinissant $Z$.
\item Par lissit\'e, si $z\in Z$, $T_z Z$ est de codimension $c$ dans $T_z\mathbb{P}^N_k$. Comme 
$T_z Z=\cap_{i=1}^c T_z\{F_i=0\}$, les $T_z\{F_i=0\}$ sont n\'ecessairement
des hyperplans transverses de $T_z\mathbb{P}^N_k$, de sorte que les $\{F_i=0\}$ sont lisses et transverses en $z$.
\end{enumerate}
\end{proof}

\begin{lemme}\label{cdb}
Soit $T$ un trait \`a corps r\'esiduel alg\'ebriquement clos et
$Z\subset \mathbb{P}^N_T$ un sous-$T$-sch\'ema ferm\'e
plat sur $T$ dont les fibres g\'eom\'etriques sont des intersections compl\`etes.
Soit $F\in H^0(\mathbb{P}^N_k,\mathcal{O}(d))$ un polyn\^ome homog\`ene de degr\'e $d$ \`a
coefficients dans $k$ s'annulant 
sur $Z_s$. Alors on peut relever $F$ en un polyn\^ome homog\`ene de degr\'e $d$ \`a
coefficients dans $R$ s'annulant 
sur $Z$.
\end{lemme}

\begin{proof}[$\mathbf{Preuve}$]

Notons $\mathcal{I}_Z$ le faisceau d'id\'eaux de $Z$. Comme $\mathbb{P}^N_T$ et $Z$ sont plats sur $T$, la suite exacte 
$0\to\mathcal{I}_Z\to\mathcal{O}_{\mathbb{P}^N_T}\to\mathcal{O}_Z\to 0$ montre que $\mathcal{I}_Z$ est plat sur $T$. De plus, cette suite exacte 
reste alors exacte apr\`es restriction \`a la fibre sp\'eciale, de sorte que $\mathcal{I}_Z|_{\mathbb{P}^N_k}=\mathcal{I}_{Z_s}$.

 On a $h^0(\mathbb{P}^N_\eta,\mathcal{I}_Z(d))=h^0(\mathbb{P}^N_{\bar{K}},\mathcal{I}_Z(d))=h^0(\mathbb{P}^N_{k},\mathcal{I}_{Z_s}(d))$ o\`u
la premi\`ere \'egalit\'e d\'ecoule de \cite{Hartshorne} III 9.3 et la seconde de \ref{propic} (iii). Ainsi, les hypoth\`eses de \cite{Hartshorne} III 12.9
appliqu\'e au faisceau $\mathcal{I}_Z(d)$ sur $\mathbb{P}^N_T$ et au morphisme structurel $q:\mathbb{P}^N_T\to T$ sont v\'erifi\'ees.
Par cons\'equent,
$q_*\mathcal{I}_Z(d)$ est localement libre sur $T$ (donc libre car $T$ est local), et 
$q_*\mathcal{I}_Z(d)\otimes_R k\to H^0(\mathbb{P}^N_k,\mathcal{I}_{Z_s}(d))$ est 
un isomorphisme.

On en d\'eduit que $F\in H^0(\mathbb{P}^N_k,\mathcal{I}_{Z_s}(d))$ se rel\`eve en un \'el\'ement de $\tilde{F}\in H^0(\mathbb{P}^N_T,\mathcal{I}_Z(d))$, 
c'est-\`a dire en un \'el\'ement de 
$H^0(\mathbb{P}^N_T,\mathcal{O}(d))$ nul sur $Z$. On conclut car, par
\cite{Hartshorne} III 5.1 (a), $H^0(\mathbb{P}^N_T,\mathcal{O}(d))$ est constitu\'e des polyn\^omes homog\`enes de degr\'e $d$ \`a
coefficients dans $R$.
\end{proof}

%\subsection{Singularit\'es de la fibre sp\'eciale}

%Le lemme suivant est essentiellement\ref{Pukhli}, Lemma 2.

%\begin{lemme}
%Soit $T$ un trait `a corps r\'esiduel alg\'ebriquement clos,
%$Z\subset \mathbb{P}^N_T$ un sous-$T$-sch\'ema ferm\'e
%r\'egulier plat sur $T$ dont les fibres sont des intersections compl\`etes. Alors $\Sing(Z_s)$ est de dimension $0$.

%\end{lemme}

%\begin{proof}[Preuve]
 
%\end{proof}

%On en d\'eduit :

%\begin{lemme}
%Sous les m\^emes hypoth\`eses, $Z_s$ est int\`egre.  

%\end{lemme}

%\begin{proof}[Preuve]
%Le sous-sch\'ema $Z_s$ de $\mathbb{P}^N_k$ est connexe par \ref{}. S'il \'etait r\'eductible, on pourrait trouver deux composantes irr\'eductibles s'intersectant.
%Par \ref{}, ces composantes irr\'eductibles sont de dimension $n$.
%Comme $Z_s$ est Cohen-Macaulay (\ref{}), \cite{Eisenbud} Theorem 18.12 montre que leur intersection est de dimension $n-1$. Ainsi, $\dim(\Sing(Z_s))\geq n-1\geq 1$, 
%ce qui est une contradiction : $Z_s$ est irr\'eductible.

%Le sch\'ema $Z_s$ est Cohen-Macaulay donc sans points immerg\'es. Comme ses composantes irr\'eductibles sont de dimension $n$, s'il n'\'etait pas
%g\'en\'eriquement r\'eduit, son lieu singulier serait de dimension $n\geq 2$, ce qui est impossible. Ainsi, $Z_s$ est r\'eduit.
%\end{proof}

\subsection{Automorphismes projectifs}

Commen\c{c}ons la preuve du th\'eor\`eme \ref{principal}. On fixe pour cela un trait $T$ \`a corps r\'esiduel alg\'ebriquement clos,
$Z,Z'\subset \mathbb{P}^N_T$ des sous-$T$-sch\'emas ferm\'es
plats sur $T$ dont les fibres g\'eom\'etriques sont des intersections compl\`etes lisses, 
et un automorphisme
$f_\eta : \mathbb{P}^N_{\eta} \to \mathbb{P}^N_{\eta}$ tel que $f_\eta(Z_{\eta})=Z'_{\eta}$.

\subsubsection{Description de l'automorphisme $f_\eta$}

\begin{lemme}\label{ba}
On peut supposer qu'il existe des entiers $\alpha_0\leq\ldots\leq\alpha_N$ tels que $f_\eta$ soit donn\'e par la formule
$f_{\eta}([x_0:\ldots:x_N])=[t^{\alpha_0}x_0:\ldots:t^{\alpha_N}x_N]$.
\end{lemme}

\begin{proof}[$\mathbf{Preuve}$]
L'automorphisme $f_{\eta}$ est induit par une automorphisme lin\'eaire $f_K:K^{N+1}\to K^{N+1}$. Quitte \`a composer $f_K$ avec une homoth\'etie, 
on peut supposer que $f_K$ induit
$f_R:R^{N+1}\to R^{N+1}$ $R$-lin\'eaire. Comme $f_R\otimes K$ est surjective, $\Coker(f_R)$ est de torsion, de sorte que $f_R$ est injective et
$\Ima(f_R)$ est un sous-$R$-module de rang maximal.
Par le th\'eor\`eme de la base adapt\'ee,
%(\cite{Lang} III Theorem 7.8)
on peut trouver des \'el\'ements $e_0,\ldots, e_{N}$ de $R^{N+1}$, $f_0,\ldots,f_N$ de $R^{N+1}$
et $\lambda_0,\ldots,\lambda_N$ de $R$ tels que $(f_R(e_i))$ soit une base de $\Ima(f_R)$, $(f_i)$ soit une base de $R^{N+1}$ et $f_R(e_i)=\lambda_i f_i$. Comme 
tout \'el\'ement de $R$ s'\'ecrit comme une unit\'e fois une puissance de l'uniformisante, on peut supposer $\lambda_i=t^{\alpha_i}$. 
Quitte \`a r\'eordonner les $e_i$ et les $f_i$, on peut supposer que $\alpha_0\leq\ldots\leq\alpha_N$. Remarquons enfin que comme $f_R$ est injective,
les $e_i$ forment une base de $R^{N+1}$.

On a montr\'e que quitte \`a composer \`a la source et au but par un automorphisme de $\mathbb{P}^N_T$, $f_\eta$ est de la forme voulue.
\end{proof}

D\'esormais, on suppose que $f_\eta$ est donn\'e par une telle formule. 

Si $\alpha_0=\ldots=\alpha_N$, $f_\eta$ est l'identit\'e 
et se prolonge donc en l'identit\'e $f:\mathbb{P}^N_T\to\mathbb{P}^N_T$. Comme, par platitude, $Z$ et $Z'$ sont les adh\'erences de $Z_{\eta}$ et $Z'_{\eta}$, et que 
$f_\eta(Z_{\eta})=Z'_{\eta}$, on a $f(Z)=Z'$ comme voulu. 

Dans toute la suite, on suppose au contraire que $\alpha_0<\alpha_N$, et on cherche \`a obtenir une contradiction.

\subsubsection{Sp\'ecialisation de l'automorphisme $f_\eta$}

On d\'efinit $p_*$ et $p^*$ de sorte que $\alpha_0=\ldots=\alpha_{p_*}<\alpha_{p_*+1}$ et
$\alpha_N=\ldots=\alpha_{N-p^*}>\alpha_{N-p^*-1}$. On note $P_*=\{X_{p_*+1}=\ldots=X_N=t=0\}$, $P^*=\{X_0=\ldots=X_{N-p^*-1}=t=0\}$, 
$L_*:=\{X_{N-p^*}=\ldots=X_N=t=0\}$ et $L^*:=\{X_0=\ldots=X_{p_*}=t=0\}$ ;
ce sont des sous-espaces lin\'eaires
de $\mathbb{P}^N_k$.

\begin{lemme}\label{inclusion}
%\begin{enumerate}[(i)]
% \item 
On a $P_*\subset Z'_{s}$. De m\^eme, $P^*\subset Z_{s}$.
%\item
%Si $\dim P_*>0$, $\dim(Z_{2,s}\cap P_*)>0$. De m\^eme, si $\dim P^*>0$, $\dim(Z_{1,s}\cap P^*)>0$.
%\end{enumerate}
\end{lemme}

\begin{proof}[$\mathbf{Preuve}$]

On montre l'\'enonc\'e concernant $Z'_{s}$ ; l'autre est sym\'etrique.

L'isomorphisme $f_\eta$ induit une application rationnelle $f:\mathbb{P}^N_T\dashrightarrow\mathbb{P}^N_T$. L'expression de $f_\eta$ montre que $f$
est d\'efinie hors de $L^*$ et que 
sa restriction $f_s$ \`a la fibre sp\'eciale est la projection depuis $L^*$ sur $P_*$. 

Notons $W$ l'adh\'erence de $f_s(Z_{s}\setminus (Z_{s}\cap L^*))$, munie de sa structure r\'eduite. Par description de $f_s$, $W\subset P_*$.
Comme, par platitude, $Z$ et $Z'$ sont les adh\'erences de $Z_{\eta}$ et $Z'_{\eta}$, et que
$f_\eta(Z_{\eta})\subset Z'_{\eta}$, on a $f(Z\setminus (Z\cap L^*))\subset Z'$. En se restreignant aux fibres sp\'eciales, il vient :
 $W\subset Z'_{s}$. Si $W=P_*$, on peut conclure ; on suppose par l'absurde que ce n'est pas le cas.

Remarquons que $Z_{s}$ n'est pas ensemblistement inclus dans $L^*$. Si c'\'etait le cas, comme $Z_{s}$ est r\'eduit, il serait sch\'ematiquement inclus
dans $L^*$, donc dans un hyperplan de $\mathbb{P}^N_k$, et cela contredit \ref{propic} (v). Ainsi, $(Z_{s}\cap L^*)\neq Z_{s}$ de sorte que,
$Z_{s}$ \'etant int\`egre par \ref{propic} (iv), 
$Z_{s}\setminus (Z_{s}\cap L^*)$ est dense dans $Z_{s}$, donc de dimension $n$. 

Si $f_s:Z_{s}\setminus (Z_{s}\cap L^*)\to W$ \'etait g\'en\'eriquement finie, $W$ serait de dimension $n$.
Comme $W\subset Z'_{s}$, $W$ serait une composante irr\'eductible de $Z'_{s}$, et comme $Z'_{s}$ est int\`egre par \ref{propic} (iv), 
on aurait $Z'_{s}=W$, donc $Z'_{s}\subset P_*$.
Alors $Z'_{s}$ serait sch\'ematiquement inclus dans un hyperplan de $\mathbb{P}^N_k$, ce qui contredit \ref{propic} (v). On a montr\'e que 
$f_s:Z_{s}\setminus (Z_{s}\cap L^*)\to W$
n'est pas g\'en\'eriquement finie.

Soit maintenant $w$ un point ferm\'e de $W$ ; on choisit $w$ g\'en\'eral de sorte que $w$ est un point lisse de $W$, et que la fibre $F$ de 
$f_s:Z_{s}\setminus (Z_{s}\cap L^*)\to W$ en $w$ est de dimension $\geq 1$. Comme $w$ est un point lisse de $W$ et que $W\neq P_*$,
il r\'esulte que $T_w W$ est un sous-espace
lin\'eaire strict de $P_*$. Notons $C$ le c\^one r\'eduit de sommet $L^*$ et de base $W$ : on a $Z_{s}\subset C$ ensemblistement car 
$f_s(Z_{s}\setminus (Z_{s}\cap L^*))\subset W$, donc 
sch\'ematiquement car $Z_{s}$ est r\'eduit. Ainsi, pour tout $f\in F$, comme $T_f C=\langle T_w W, L^*\rangle$, $T_{f}Z_{s}\subset \langle T_w W, L^*\rangle$.
Comme $T_w W$ est un sous-espace lin\'eaire strict de $P_*$, $\langle T_w W, L^*\rangle$ est un sous-espace lin\'eaire strict de $\mathbb{P}^N_k$. On obtient une 
contradiction car, par \cite{LazarsfeldII} 6.3.5, 6.3.6, une hypersurface de $\mathbb{P}^N_k$ ne peut pas \^etre tangente \`a $Z_{s}$ le long
d'une sous-vari\'et\'e de dimension $\geq 1$.
\end{proof}

\begin{cor}\label{eqlisse}
Si $F\in H^0(\mathbb{P}^N_k,\mathcal{O}(d))$ fait partie d'une suite r\'eguli\`ere globale d\'efinissant $Z_{s}$, $\{F=0\}$ contient $P^*$ et
y est lisse.

De m\^eme, si $F'\in H^0(\mathbb{P}^N_k,\mathcal{O}(d))$ fait partie d'une suite r\'eguli\`ere globale d\'efinissant $Z'_{s}$, $\{F'=0\}$ contient $P_*$ et
y est lisse.
\end{cor}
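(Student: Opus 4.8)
The plan is to deduce this corollary immediately from the two facts already in hand: the inclusion of linear spaces provided by Lemma~\ref{inclusion}, and the smoothness-and-transversality statement of Proposition~\ref{propic}~(viii). There is no genuinely difficult step here; the corollary is a formal consequence of combining these two results, and the only thing demanding attention is the bookkeeping that pairs each linear space with the correct subscheme.

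First I would unwind the hypothesis. To say that $F\in H^0(\mathbb{P}^N_k,\mathcal{O}(d))$ is part of a global regular sequence defining $Z_s$ means precisely that there exist homogeneous equations $F_2,\ldots,F_c$ such that $F,F_2,\ldots,F_c$ cut out $Z_s$ scheme-theoretically; in particular $Z_s\subset\{F=0\}$. Since $k$ is algebraically closed, $Z_s$ is itself a smooth complete intersection, so Lemma~\ref{inclusion} applies and yields $P^*\subset Z_s$. Composing the two inclusions gives $P^*\subset\{F=0\}$, which is the containment assertion.

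For the smoothness assertion I would apply Proposition~\ref{propic}~(viii) to the smooth complete intersection $Z_s$ equipped with the defining sequence $F,F_2,\ldots,F_c$: it guarantees that each hypersurface $\{F_i=0\}$, and in particular $\{F=0\}$, is smooth at every point of $Z_s$. As $P^*\subset Z_s$, it follows at once that $\{F=0\}$ is smooth along $P^*$. The statement concerning $Z'_s$ is entirely symmetric: the part of Lemma~\ref{inclusion} dealing with $Z'_s$ provides $P_*\subset Z'_s$, and the identical two-line argument with $Z'_s$, $F'$ and $P_*$ in place of $Z_s$, $F$ and $P^*$ gives that $\{F'=0\}$ contains $P_*$ and is smooth there. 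The only point requiring care — and it is not so much an obstacle as a matter of notation — is to match $P^*$ with $Z_s$ and $P_*$ with $Z'_s$, exactly as the two halves of Lemma~\ref{inclusion} are phrased.
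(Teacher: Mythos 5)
Your proof is correct and is exactly the paper's argument: the paper's proof of this corollary consists of the single line that it follows from Lemma~\ref{inclusion} and Proposition~\ref{propic}~(viii), which is precisely the combination you spell out, with the correct pairing of $P^*$ with $Z_s$ and $P_*$ with $Z'_s$. Your version merely makes explicit the unwinding (the containment $Z_s\subset\{F=0\}$ plus the smoothness of each member of a defining sequence along the smooth complete intersection) that the paper leaves to the reader.
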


\begin{proof}[$\mathbf{Preuve}$]

C'est une cons\'equence du lemme \ref{inclusion} et de \ref{propic} (viii).
\end{proof}

\subsection{\'Etude des \'equations de petit degr\'e}

Notons $\mathfrak{M}_d$ l'ensemble des mon\^omes de degr\'e $d$ en $X_0,\ldots,X_N$. Si
$M=X_0^{e_0}\ldots X_N^{e_N}$, on note $\deg_{\alpha}(M)=\sum_i \alpha_i e_i$ : c'est le $\alpha$-degr\'e du mon\^ome $M$. 
Si $F\in H^0(\mathbb{P}^N_k,\mathcal{O}(d))$, on dit qu'un mon\^ome $M$ intervient dans $F$ si le coefficient de $M$ dans $F$ n'est pas nul. On dit qu'une variable $X_i$
intervient dans $F$ si un mon\^ome qu'elle divise intervient dans $F$.

\subsubsection{Sp\'ecialisation d'\'equations}\label{specialisation}

Soit $F\in H^0(\mathbb{P}^N_k,\mathcal{O}(d))$ une \'equation de degr\'e $d$ non nulle de $Z_{s}$. Par le lemme \ref{cdb}, on peut la relever en 
$\tilde{F}=\sum_{M\in\mathfrak{M}_d} a_M M\in H^0(\mathbb{P}^N_T,\mathcal{O}(d))$, 
une \'equation de degr\'e $d$ non nulle de $Z$. Vu l'expression de $f_{\eta}$, $\sum t^{-\deg_{\alpha}(M)}a_M M$
est une \'equation de degr\'e $d$ non nulle de $Z'_{\eta}$. Notons $r_{\tilde{F}}=\min_{M\in\mathfrak{M}_d} (v(a_M)-\deg_{\alpha}(M))$, consid\'erons
$\tilde{F'}=\sum t^{-\deg_{\alpha}(M)-r_{\tilde{F}}}a_M M\in H^0(\mathbb{P}^N_T,\mathcal{O}(d))$ et notons $F'=\pi(\tilde{F'})\in H^0(\mathbb{P}^N_k,\mathcal{O}(d))$.
Par choix de $r_{\tilde{F}}$, $F'$ est non nulle. Comme $Z'_{\eta}\subset \{\tilde{F'}=0\}$, et que $Z'$ est son adh\'erence par platitude, $Z'\subset \{\tilde{F'}=0\}$.
Prenant les fibres sp\'eciales, on obtient $Z'_{s}\subset\{F'=0\}$ : $F'$ est une \'equation de degr\'e $d$ non nulle de $Z'_{s}$.

\subsubsection{\'Equations de degr\'e $d\geq 3$}\label{paragtrois}

On garde les notations du paragraphe \ref{specialisation}.

\begin{lemme}\label{degretrois}
Supposons que $d\geq 3$ et que $F$ fasse partie d'une suite r\'eguli\`ere globale d\'efinissant $Z_{s}$. Alors $\{F'=0\}$ contient $P_*$ et y est singulier.

De plus, les mon\^omes intervenant dans $F'$ sont de $\alpha$-degr\'e $\geq \alpha_0+(d-1)\alpha_N$.
\end{lemme}

\begin{proof}[$\mathbf{Preuve}$]

 Si aucun mon\^ome $X_{i_1}\ldots X_{i_{d-1}}X_j$ avec $N-p^*\leq i_1,\ldots,i_{d-1}\leq N$ n'intervient dans $F$, 
on voit que $P^*\subset\{F=0\}$, et le crit\`ere jacobien
montre que $\{F=0\}$ est singulier le long de $P^*$. Cela contredit le corollaire \ref{eqlisse}.

 Soit donc $M$ un mon\^ome de cette forme intervenant dans $F$. Alors
$r_{\tilde{F}}\leq v(a_M)-\deg_{\alpha}(M)=-\deg_{\alpha}(M)\leq -(\alpha_0+(d-1)\alpha_N)$.

 Soit maintenant $M'$ un mon\^ome intervenant dans $F'$, de sorte que $v(a_{M'})-\deg_{\alpha}(M')-r_{\tilde{F}}=0$. 
Alors $\deg_{\alpha}(M')=v(a_{M'})-r_{\tilde{F}}\geq-r_{\tilde{F}}\geq \alpha_0+(d-1)\alpha_N$.
Comme $d\geq 3$, et que $\alpha_0<\alpha_N$, cela implique $\deg_{\alpha}(M')>(d-1)\alpha_0+\alpha_N$. En particulier, aucun mon\^ome de la forme
$X_{j_1}\ldots X_{j_{d-1}}X_i$ avec $0\leq j_1,\ldots,j_{d-1}\leq p_*$ n'intervient dans $F'$. Cela implique que $\{F'=0\}$ contient $P_*$ 
et le crit\`ere jacobien montre que $\{F'=0\}$ est singulier le long de $P_*$.
\end{proof}

On peut \`a pr\'esent montrer le th\'eor\`eme \ref{principal} si $d_1\geq 3$.

\begin{proof}[$\mathbf{Preuve \text{ }du \text{ }th\acute{e}or\grave{e}me\text{ }\ref{principal}\text{ }si\text{ }d_1\geq 3}$]~

On prend pour $F$ une \'equation de degr\'e $d_1$ de $Z_{s}$ qui fait partie d'une suite r\'eguli\`ere globale d\'efinissant $Z_{s}$. Alors, 
par le lemme \ref{degretrois}, $\{F'=0\}$ contient $P_*$ et y est singulier.

Comme $F'$ est une \'equation non nulle de degr\'e $d_1$ de $Z'_{s}$,
par \ref{propic} (vi), elle fait partie d'une suite r\'eguli\`ere globale d\'efinissant $Z'_{s}$. Cela contredit le
corollaire \ref{eqlisse}.
\end{proof}

\subsubsection{\'Equations de degr\'e $2$}\label{paragdeux}

On garde les notations du paragraphe \ref{specialisation}.

\begin{lemme}\label{degredeux}
Supposons que $d=d_1=2$.

 Alors, si $N-p^*\leq i\leq N$, la variable $X_i$ intervient dans $F$, mais seulement dans 
des mon\^omes de la forme $X_iX_j$ avec $0\leq j\leq p_*$. 

De m\^eme, si $0\leq j\leq p_*$, la variable $X_j$ intervient dans $F'$, mais seulement dans 
des mon\^omes de la forme $X_iX_j$ avec $N-p^*\leq i\leq N$.

De plus, $r_{\tilde{F}}=-\alpha_0-\alpha_N$.
\end{lemme}

\begin{proof}[$\mathbf{Preuve}$]
Par \ref{propic} (vi), $F$ fait partie d'une suite r\'eguli\`ere globale d\'efinissant $Z_{s}$. En particulier, par le corollaire \ref{eqlisse},
$\{F=0\}$ contient $P^*$ et y est lisse. Si $N-p^*\leq i\leq N$ est tel que $X_i$ n'intervient pas dans $F$, 
le crit\`ere jacobien montre que $\{F=0\}$ est singulier en
le point de $P^*$ ayant toutes ses coordonn\'ees nulles sauf la $i$-\`eme : c'est absurde.
De m\^eme, si $0\leq j\leq p_*$, la variable $X_j$ appara\^it dans $F'$.

Soient maintenant $N-p^*\leq i\leq N$, $0\leq j\leq p_*$, $M$ un mon\^ome intervenant dans $F$ divisible par $X_i$ et $M'$ un mon\^ome intervenant dans $F'$
divisible par $X_j$. On a $r_{\tilde{F}}\leq v(a_M)-\deg_{\alpha}(M)=-\deg_{\alpha}(M)\leq-\alpha_0-\alpha_N$ d'une part et 
$r_{\tilde{F}}=v(a_{M'})-\deg_\alpha(M')\geq-\deg_\alpha(M')\geq-\alpha_0-\alpha_N$ d'autre part. Ces in\'egalit\'es sont donc des \'egalit\'es. En particulier, 
$\deg_{\alpha}(M)=\deg_\alpha(M')=\alpha_0+\alpha_N$, ce qui montre les restrictions voulues sur les mon\^omes intervenant dans $F$ et $F'$, et 
$r_{\tilde{F}}=-\alpha_0-\alpha_N$.
\end{proof}

Montrons \`a pr\'esent le th\'eor\`eme \ref{principal} si $c\geq 2$ et $d_1=d_2=2$.

\begin{proof}[$\mathbf{Preuve \text{ }du \text{ }th\acute{e}or\grave{e}me\text{ }\ref{principal}\text{ }si\text{ }c\geq 2\text{ }et\text{ }d_1=d_2=2}$]~

On peut supposer, quitte \`a \'echanger $Z$ et $Z'$, que $p^*\geq p_*$. Soit $(F_t)_{t\in\mathbb{P}^1}$ un pinceau d'\'equations de degr\'e $2$ de $Z_{s}$. Comme $F_t$
fait partie d'une suite r\'eguli\`ere globale d\'efinissant $Z_{s}$ par \ref{propic} (vi), le corollaire \ref{eqlisse} montre que
$\{F_t=0\}$ contient $P^*$ et y est lisse. De plus, les restrictions sur les mon\^omes de $F_t$ obtenues dans le
lemme \ref{degredeux} montrent que si $x\in P^*$, $L^*\subset T_x\{F_t=0\}$. L'ensemble des hyperplans 
de $\mathbb{P}^N_k$ contenant $L^*$ s'identifie naturellement au dual $(P_*)^\vee$ de $P_*$, et on obtient un morphisme
$\Gamma: \mathbb{P}^1\times P^*\to (P_*)^\vee$ d\'efini par $\Gamma(t,x)=T_x\{F_t=0\}$. 

Comme $\dim(\mathbb{P}^1\times P^*)=p^*+1>p_*=\dim((P_*)^\vee)$, on peut trouver un hyperplan $H\in (P_*)^\vee$,
et $C$ une courbe irr\'eductible dans $\mathbb{P}^1\times P^*$ tels que $\Gamma(t,x)=H$ pour $(t,x)\in C$.
Si la projection $C\to P^*$ n'est pas constante, son image est une courbe, et $H$ est tangent \`a $Z_{s}$ le long de cette courbe, ce qui contredit
\cite{LazarsfeldII} 6.3.5, 6.3.6. Sinon, $C=\mathbb{P}^1\times \{x\}$ pour $x\in P^*$, et tous les $\{F_t=0\}$ ont espace tangent $H$ en $x$. En particulier,
les \'equations $F_0$ et $F_1$ 
ne sont pas transverses en $x$. Comme, par \ref{propic} (vi), elles font partie d'une suite r\'eguli\`ere globale d\'efinissant $Z_{s}$, 
cela contredit \ref{propic} (viii).
\end{proof}

\subsubsection{Fin de la preuve}\label{paragfin}

Il reste \`a prouver le th\'eor\`eme \ref{principal} si $c\geq 2$, $d_1=2$ et $d_2\geq 3$.

\begin{proof}[$\mathbf{Preuve \text{ }du \text{ }th\acute{e}or\grave{e}me\text{ }\ref{principal}\text{ }si\text{ }c\geq 2,\text{ }d_1=2\text{ }et\text{ }d_2\geq3}$]~

Soient $F\in H^0(\mathbb{P}^N_k,\mathcal{O}(2))$ et $G\in H^0(\mathbb{P}^N_k,\mathcal{O}(d_2))$ des \'equations de $Z_{s}$ faisant partie d'une suite 
r\'eguli\`ere globale la d\'efinissant. Appliquant la discussion du paragraphe \ref{specialisation} \`a $F$ et $G$, on obtient d'une part 
des \'equations $\tilde{F}$,  $\tilde{F}'$, $F'$ et un entier $r_{\tilde{F}}$, d'autre part des \'equations $\tilde{G}$, 
$\tilde{G}'$, $G'$ et un entier $r_{\tilde{G}}$. Par le lemme \ref{degredeux}, $r_{\tilde{F}}=-\alpha_0-\alpha_N$.

Par le lemme \ref{degretrois}, $G'$ est singulier le long de $P_*$, de sorte que, par le corollaire \ref{eqlisse},
$G'$ ne peut faire partie d'une suite r\'eguli\`ere globale d\'efinissant $Z'_{s}$. Par \ref{propic} (vii), cela signifie qu'il existe 
$Q\in H^0(\mathbb{P}^N_k,\mathcal{O}(d_2-2))$ 
tel que $G'=QF'$. 

Par le lemme \ref{degredeux}, la variable $X_0$ intervient dans $F'$. Par le lemme \ref{degretrois}, tous les mon\^omes intervenant dans $G'$ 
ont un $\alpha$-degr\'e $\geq\alpha_0+(d_2-1)\alpha_N$. Ces deux faits impliquent que tous les mon\^omes intervenant dans $Q$ ont $\alpha$-degr\'e $(d_2-2)\alpha_N$,
et que $G'$ fait intervenir au moins un mon\^ome de $\alpha$-degr\'e $\alpha_0+(d_2-1)\alpha_N$. En particulier, on a \'egalit\'e dans les in\'egalit\'es de la 
d\'emonstration du lemme \ref{degretrois}, ce qui montre $r_{\tilde{G}}=-\alpha_0-(d_2-1)\alpha_N$.

Soit $\tilde{Q}$ un relev\'e de $Q$ \`a $H^0(\mathbb{P}^N_T,\mathcal{O}(d_2-2))$ ne faisant intervenir que des mon\^omes de
$\alpha$-degr\'e $(d_2-2)\alpha_N$. Posons $H=G-QF$. Comme $\{F=G=0\}=\{F=H=0\}$, $F$ et $H$ font partie d'une suite r\'eguli\`ere globale d\'efinissant $Z_{s}$.
On applique la discussion du paragraphe \ref{specialisation} \`a $H$ et \`a son relev\'e $\tilde{H}=\tilde{G}-\tilde{Q}\tilde{F}$. Comme le raisonnement effectu\'e
ci-dessus pour $F$ et $G$ vaut aussi pour $F$ et $H$, on a $r_{\tilde{H}}=-\alpha_0-(d_2-1)\alpha_N$. 

Calculons $\tilde{H}'$. On note $a_M^{\tilde{F}}$ le coefficient du mon\^ome $M$ dans $\tilde{F}$, et on utilise des notations
analogues pour $\tilde{G}$, $\tilde{H}$ et $\tilde{Q}$. Comme le $\alpha$-degr\'e d'un produit de mon\^omes est la somme des $\alpha$-degr\'es de ces mon\^omes,
il vient :

\begin{alignat*}{3}
\tilde{H}' &=t^{-r_{\tilde{H}}}\Bigg(\sum_{M\in\mathfrak{M}_{d_2}}&&t^{-\deg_{\alpha}(M)}a_M^{\tilde{H}}M\Bigg)\\
           &=t^{-r_{\tilde{H}}}\Bigg( \sum_{M\in\mathfrak{M}_{d_2}}&&t^{-\deg_{\alpha}(M)}a_M^{\tilde{G}}M \\
           &  &&-\sum_{M\in\mathfrak{M}_{d_2-2}}
t^{-\deg_{\alpha}(M)}a_M^{\tilde{Q}}M
\sum_{M\in\mathfrak{M}_2}t^{-\deg_{\alpha}(M)}a_M^{\tilde{F}}M\Bigg).
%           &=t^{\alpha_0+(d_2-1)\alpha_N}(\sum_{M\in\mathfrak{M}_d}t^{-\deg_{\alpha}(M)}a_M^{\tilde{G}}M)-
%t^{(d_2-2)\alpha_N}(\sum_{M\in\mathfrak{M}_{d-2}}t^{-\deg_{\alpha}(M)}a_M^{\tilde{Q}}M)
%t^{\alpha_0+\alpha_N}\sum_{M\in\mathfrak{M}_2}t^{-\deg_{\alpha}(M)}a_M^{\tilde{F}}M)&&\\
%           &=t^{-r_{\tilde{G}}}\left(\sum_{M\in\mathfrak{M}_d}t^{-\deg_{\alpha}(M)}a_M^{\tilde{G}}M\right)-
%\left(\sum_{M\in\mathfrak{M}_{d-2}}a_M^{\tilde{Q}}M\right)\left(t^{-r_{\tilde{F}}}\sum_{M\in\mathfrak{M}_2}t^{-\deg_{\alpha}(M)}a_M^{\tilde{F}}M\right)&&\\
%           &=\tilde{G}'-\tilde{Q}\tilde{F}' &&
\end{alignat*}

Comme $r_{\tilde{G}}=r_{\tilde{H}}=-\alpha_0-(d_2-1)\alpha_N$, $r_{\tilde{F}}=-\alpha_0-\alpha_N$, et que tous les mon\^omes
intervenant dans $\tilde{Q}$ sont de $\alpha$-degr\'e
$(d_2-2)\alpha_N$, on obtient :

\begin{alignat*}{2}
\tilde{H}' &=t^{-r_{\tilde{G}}}\Bigg(\sum_{M\in\mathfrak{M}_{d_2}}t^{-\deg_{\alpha}(M)}a_M^{\tilde{G}}M\Bigg)-
\tilde{Q}\Bigg(t^{-r_{\tilde{F}}}\sum_{M\in\mathfrak{M}_2}t^{-\deg_{\alpha}(M)}a_M^{\tilde{F}}M\Bigg)\\
           &=\tilde{G}'-\tilde{Q}\tilde{F}'.
\end{alignat*}

 Sp\'ecialisant cette 
\'equation, il vient $H'=G'-QF'=0$. C'est une contradiction car, dans la construction du paragraphe \ref{specialisation}, $r_{\tilde{H}}$ est choisi de 
sorte que $H'$ soit non nul. Cela conclut la preuve.
\end{proof}

\section{Automorphismes}

\subsection{Sch\'ema en groupes des automorphismes}\label{parauto}

On regroupe dans ce paragraphe des g\'en\'eralit\'es sur les automorphismes de vari\'et\'es, n\'ecessaires pour l'\'enonc\'e et la preuve du
th\'eor\`eme \ref{autoic}.

Soit $k$ un corps ; on note $p$ sa caract\'eristique (on peut avoir $p=0$). 
Si $Z$ et $T$ sont des $k$-sch\'emas, on notera $Z_T$ le $T$-sch\'ema $Z\times_k T$.

On adopte les conventions de \cite{FGA} Chap. 9 en ce qui concerne les foncteurs et sch\'emas de Picard.

\subsubsection{Automorphismes d'une vari\'et\'e}

On rappelle que, si $Z$ est un $k$-sch\'ema projectif, le foncteur qui \`a un $k$-sch\'ema $T$ associe l'ensemble
$\Aut_T(Z_T)$ des $T$-automorphismes de $Z_T$ est repr\'esentable par un $k$-sch\'ema en groupes not\'e $\Aut_k(Z)$. 
Le groupe des composantes connexes de $\Aut_k(Z)$ est 
d\'enombrable et sa composante neutre est un $k$-sch\'ema en groupes de type fini dont l'espace tangent en l'identit\'e s'identifie \`a $H^0(Z,T_Z)$.
Pour ces faits, on pourra consulter \cite{Sernesi} Prop. 4.6.10.
% l'argument consiste \`a associer \`a 
%un automorphisme son graphe, et \`a utiliser le sch\'ema de Hilbert
%de $X\times_k X$.

\subsubsection{Automorphismes d'une vari\'et\'e polaris\'ee}

 Soient $g:Z\to\Spec(k)$ un $k$-sch\'ema projectif 
g\'eom\'etriquement int\`egre, et
$\lambda\in\mathbf{Pic}_{Z/k}(k)$. Consid\'erons le foncteur qui \`a un $k$-sch\'ema $T$ associe l'ensemble des 
$f\in\Aut_T(Z_T)$ tels que le diagramme suivant commute :

\begin{equation}\label{diagpic}
\xymatrix {
 \mathbf{Pic}_{Z_T/T}\ar^{f^*}[rr]& &\mathbf{Pic}_{Z_T/T}   \\
&    T  \ar^{\lambda_T}[ul]\ar_{\lambda_T}[ur]    &
}
\end{equation}

% morphisme induit $f^*:\mathbf{Pic}_{Z/k}\times_k T\to \mathbf{Pic}_{Z_T/T}$ envoie $\lambda$ sur $\lambda$.
Ce foncteur est repr\'esentable par un sous-sch\'ema ferm\'e de $\Aut_k(Z)$. En effet, si $T$ est un $k$-sch\'ema et $f\in\Aut_T(Z\times_k T)$, 
le sous-sch\'ema ferm\'e de $T$ d\'efini par l'\'equation $f^*\circ\lambda_T-\lambda_T=0$ v\'erifie la propri\'et\'e universelle requise. 
Il est de plus imm\'ediat que c'est un sous-sch\'ema en groupes. On le note $\Aut_k(Z,\lambda)$.

% Par \cite{FGA} Th. 9.2.5 1, 
%$\Pic_{Z/k}(T)\subset\mathbf{Pic}_{Z/k}(T)=\mathbf{Pic}_{Z_T/T}(T)$, de sorte que la commutation du diagramme ci-dessus est \'equivalente \`a l'existence
%d'un fibr\'e en droites $\mathcal{L}$ sur $T$ tel que $f^*\lambda_T-\lambda_T=g_T^*\mathcal{L}$. NON : ca veut rien dire.

\subsubsection{Cas des intersections compl\`etes}\label{autoproj}

Supposons maintenant que $Z\subset\mathbb{P}^N_k$ est une intersection compl\`ete lisse polaris\'ee par $\mathcal{O}(1)$. 
On note $i:Z\subset\mathbb{P}^N_k$ l'inclusion,
et $g$ et $h$ les morphismes structurels de $Z$ et $\mathbb{P}^N_k$.  On va 
donner une description plus concr\`ete de 
$\Aut_k(Z,\mathcal{O}(1))$, en l'identifiant \`a un sous-sch\'ema en groupes de $PGL_{N+1,k}$. 

Rappelons que $PGL_{N+1,k}=\Aut_k(\mathbb{P}^N_k)$, de sorte que si $T$ est un $k$-sch\'ema, $PGL_{N+1,k}(T)$ est l'ensemble des
$T$-automorphismes de $\mathbb{P}^N_T$ (voir \cite{GIT} 0.5b). 
Consid\'erons le sous-foncteur $G$ de $PGL_{N+1,k}$ qui associe \`a un $k$-sch\'ema $T$
l'ensemble $G(T)$ des $f\in PGL_{N+1,k}(T)$ tels que $f(Z_T)=Z_T$. V\'erifions que $G$ est repr\'esentable par un sous-sch\'ema ferm\'e de $PGL_{N+1,k}$. Pour
cela, soit $T$ un $k$-sch\'ema et $f\in PGL_{N+1,k}(T)$. Les sous-sch\'emas $Z_T$ et $f(Z_T)$ de $\mathbb{P}^N_T$ sont plats sur $T$ et induisent
donc des sections $T\to \Hilb(\mathbb{P}^N_T/T)$ qui sont des immersions ferm\'ees par \cite{EGA1} 5.4.6. L'intersection de ces deux sous-sch\'emas ferm\'es de 
$\Hilb(\mathbb{P}^N_T/T)$ s'identifie \`a un sous-sch\'ema ferm\'e de $T$ qui v\'erifie la propri\'et\'e universelle requise. On a 
montr\'e que $G$ est repr\'esentable par un sous-sch\'ema ferm\'e de $PGL_{N+1,k}$ ; que ce soit un sous-sch\'ema en groupes est imm\'ediat.

On va montrer que $G$ et $\Aut_k(Z,\mathcal{O}(1))$ co\"incident. Si $f\in G(T)$, on note $\Phi(T)(f)=f|_{Z_T}$. Comme $f$ pr\'eserve n\'ecessairement la polarisation
$\mathcal{O}(1)\in \mathbf{Pic}_{\mathbb{P}^N_T/T}(T)$ de $\mathbb{P}^N_T$, elle pr\'eserve sa restriction \`a $Z_T$, de sorte que 
$\Phi(T)(f)\in\Aut_k(Z,\mathcal{O}(1))(T)$.
On a ainsi construit un morphisme de foncteurs $\Phi: G\to \Aut_k(Z,\mathcal{O}(1))$ ; on v\'erifie ais\'ement qu'il respecte les lois de groupes. 
Montrons en deux temps que c'est un isomorphisme.

Tout d'abord, montrons que $\Phi(T)$ est injectif. Pour cela, soit $f\in \Ker(\Phi(T))$ : $f|_{Z_T}:Z_T\to Z_T$ est l'identit\'e. 
Par la proposition \ref{propic} (iii), $i^*:H^0(\mathbb{P}^N_k,\mathcal{O}(1))\to
H^0(Z,\mathcal{O}(1))$ est un isomorphisme de sorte que par changement de base plat par $T\to \Spec(k)$, $i_T^*:h_{T*}\mathcal{O}(1)\to g_{T*}\mathcal{O}(1)$
est un isomorphisme. La commutativit\'e du diagramme ci-dessous :
$$\xymatrix {
 g_{T*}\mathcal{O}(1)\ar^{f|_{Z_T}^*}[r]& g_{T*}\mathcal{O}(1)  \\
   h_{T*}\mathcal{O}(1)  \ar^{f^*}[r]\ar^{i_T^*}[u]  &h_{T*}\mathcal{O}(1)\ar^{i_T^*}[u]
}$$
montre que $f^*:h_{T*}\mathcal{O}(1)\to h_{T*}\mathcal{O}(1)$ est l'identit\'e, de sorte que $f$ est l'identit\'e.

Montrons enfin que $\Phi(T)$ est surjectif. Pour cela, soit $f:Z_T\to Z_T$ un \'el\'ement de $\Aut_k(Z,\mathcal{O}(1))(T)$ :
$f^*\mathcal{O}(1)\otimes\mathcal{O}(-1)$ est trivial dans $\mathbf{Pic}_{Z_T/T}(T)=\mathbf{Pic}_{Z/k}(T)$, donc, par
\cite{FGA} Th. 9.2.5 1, dans $\Pic_{Z/k}(T)$.
Il existe donc $\mathcal{L}\in\Pic(T)$
tel que $f^*\mathcal{O}(1)\simeq\mathcal{O}(1)\otimes g_T^*\mathcal{L}$. Par cons\'equent, $f^*$ induit un isomorphisme entre 
$g_{T*}\mathcal{O}(1)$ et $g_{T*}\mathcal{O}(1)\otimes\mathcal{L}$. Composant avec $i_T^*$, on obtient un isomorphisme
entre $h_{T*}\mathcal{O}(1)$ et $h_{T*}\mathcal{O}(1)\otimes\mathcal{L}$, donc entre les fibr\'es projectifs associ\'es : c'est un
isomorphisme $\mathbb{P}^N_T\to\mathbb{P}^N_T$. Par construction, c'est un \'el\'ement de $G(T)$ qui est un ant\'ec\'edent de $f$ par $\Phi(T)$.

\vspace{1em}

Ainsi, suivant la situation, on pourra consid\'erer $\Aut_k(Z,\mathcal{O}(1))$ comme un sous-sch\'ema en groupes ferm\'e
de $\Aut_k(Z)$ ou de $PGL_{N+1,k}$.

\subsection{$\mkern-5mu$Automorphismes des intersections compl\`etes lisses}

\subsubsection{\'Enonc\'e du th\'eor\`eme}

Le but de ce chapitre est de montrer que, si $Z$ est une intersection compl\`ete lisse, $\Aut_k(Z)$ et $\Aut_k(Z,\mathcal{O}(1))$ sont, 
sauf pour un petit nombre d'exceptions qu'on explique, des sch\'emas en groupes finis r\'eduits.

Le th\'eor\`eme principal est le suivant :

\begin{thm}\label{autoic}
Soit $Z$ une intersection compl\`ete lisse. 
Les sch\'emas en groupes $\Aut_k(Z)$ et $\Aut_k(Z,\mathcal{O}(1))$ co\"incident et sont finis r\'eduits, sauf dans les cas suivants :

\begin{enumerate}[(i)]
 \item \textbf{Quadriques} : si $c=1$ et $d_1=2$, $\Aut_k(Z)=\Aut_k(Z,\mathcal{O}(1))$ est lisse de dimension $\frac{N(N+1)}{2}$.

\item \textbf{Courbes de genre} $\mathbf{1}$ : si $N=2$, $c=1$ et $d_1=3$ ou si $N=3$, $c=2$ et $d_1=d_2=2$, $\Aut_k(Z)$ est de type fini et sa composante neutre
est une courbe elliptique. De plus $\Aut_k(Z,\mathcal{O}(1))$ est fini ; il est aussi r\'eduit sauf
si $N=2$, $c=1$, $d_1=3$ et $p=3$ ou si $N=3$, $c=2$, $d_1=d_2=2$ et $p=2$. 

\item \textbf{Courbes de genre} $\mathbf{\geq 2}$ : dans les autres cas o\`u $n=1$, $\Aut_k(Z)$ et $\Aut_k(Z,\mathcal{O}(1))$ sont tous deux finis r\'eduits, 
mais peuvent
ne pas co\"incider.

\item \textbf{Surfaces K3} : si $N=3$, $c=1$ et $d_1=4$, si $N=4$, $c=2$, $d_1=2$ et $d_2=3$ ou si $N=5$, $c=3$ et $d_1=d_2=d_3=2$, $\Aut_k(Z)$ est de dimension nulle,
r\'eduit et au plus d\'enombrable tandis que $\Aut_k(Z,\mathcal{O}(1))$ est fini r\'eduit.

\item \textbf{Intersections de deux quadriques} : si $N\geq 5$ est impair, $c=2$, $d_1=d_2=2$ et $p=2$, $\Aut_k(Z)=\Aut_k(Z,\mathcal{O}(1))$ est fini non r\'eduit.

\end{enumerate}

\end{thm}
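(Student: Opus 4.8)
The plan is to study $\Aut_k(Z,\mathcal{O}(1))$ through its realization as a closed subgroup scheme of $\PGL_{N+1,k}$ given in \S\ref{autoproj}, and to deduce the statements about $\Aut_k(Z)$ afterwards. We may assume $k$ algebraically closed, since all the properties in question are insensitive to the base change to $\bar{k}$. Finiteness then comes almost for free from the first part of the paper: outside the quadric case, Theorem \ref{principal} asserts that $\mathcal{M}$ is separated, so its stabilizers $\Aut_k(Z,\mathcal{O}(1))$ are proper; being also affine (they are closed in $\PGL_{N+1,k}$), they are finite, exactly as in Remark \ref{remautofini}. The case $N\geq 5$ odd, $c=2$, $d_1=d_2=2$, $\car(k)=2$ of (v), where no easier argument seems available, is obtained precisely this way. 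The quadric case (i) is handled by hand: there $\Aut_k(Z)=\Aut_k(Z,\mathcal{O}(1))$ is the projective orthogonal group, smooth of dimension $\tfrac{N(N+1)}{2}$.

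Once finiteness is known, reducedness of $\Aut_k(Z,\mathcal{O}(1))$ is equivalent, over a field, to the vanishing of its Lie algebra, so the next step is to compute that Lie algebra. For $n\geq 2$, the restriction of the Euler sequence to $Z$ together with Proposition \ref{propic} (ii),(iii) gives $H^0(Z,T_{\mathbb{P}^N}|_Z)=\mathfrak{pgl}_{N+1}$, and the conormal sequence $0\to T_Z\to T_{\mathbb{P}^N}|_Z\to\bigoplus_l\mathcal{O}_Z(d_l)\to 0$ then identifies
$$\mathrm{Lie}\,\Aut_k(Z,\mathcal{O}(1))=H^0(Z,T_Z)=\Ker\Big(\mathfrak{pgl}_{N+1}\xrightarrow{\ \phi\ }\bigoplus_{l=1}^{c} H^0(Z,\mathcal{O}_Z(d_l))\Big),$$
where $\phi$ sends a linear vector field $A=\sum_{i,j}a_{ij}X_j\partial_{X_i}$ to the classes of the polynomials $A\cdot F_l$ modulo $(F_1,\dots,F_c)$. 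Concretely $H^0(Z,T_Z)$ is the space of linear vector fields tangent to $Z$, that is, of those $A$ (modulo the Euler field) with $A\cdot F_l\in(F_1,\dots,F_c)$ for all $l$. I would then prove that this kernel reduces to $0$ for every smooth $Z$ outside the exceptional list, by exploiting smoothness through the Jacobian criterion to rigidify the system $A\cdot F_l=\sum_m B_{lm}F_m$ with $\deg B_{lm}=d_l-d_m$: the non-degeneracy forced by smoothness leaves no room for $A$ beyond scalars. The delicate points are the boundary multidegrees, and especially the pencil-of-quadrics case in characteristic $2$, where the quadratic forms and their now alternating polar bilinear forms diverge and create a genuine infinitesimal, $\mu_2$- or $\alpha_2$-type contribution to the kernel; this is exactly the non-reducedness of (v).

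To pass from $\Aut_k(Z,\mathcal{O}(1))$ to $\Aut_k(Z)$ I use that every automorphism preserves the canonical class $K_Z=\mathcal{O}_Z(\sum_l d_l-N-1)$. When $n\geq 3$, the Grothendieck--Lefschetz theorem gives $\Pic(Z)=\mathbb{Z}\cdot\mathcal{O}(1)$, so $\mathcal{O}(1)$ is preserved and the two group schemes coincide; combined with the Lie-algebra computation this settles all of $n\geq 3$, the only surviving exceptions being (i) and (v). For $n=2$ the same conclusion holds as long as $\sum_l d_l\neq N+1$: the action of the finite automorphism group on $\mathrm{NS}(Z)$, together with preservation of $K_Z$, forces preservation of the polarization, covering the del Pezzo and general-type surfaces. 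The genuinely different behaviour appears on the Calabi--Yau boundary $\sum_l d_l=N+1$, which for $n=2$ is the K3 case (iv): there $\Pic(Z)$ may have rank $>1$, $\Aut_k(Z)$ is $0$-dimensional, reduced and at most countable (an infinite reflection-type group being possible), while imposing $f^*\mathcal{O}(1)=\mathcal{O}(1)$ cuts $\Aut_k(Z,\mathcal{O}(1))$ down to a finite reduced group.

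It remains to treat the curves $n=1$, where the identification of the Lie algebra with $H^0(Z,T_Z)$ must be corrected because $H^1(Z,\mathcal{O}_Z)\neq 0$; the genus is read off from $2g-2=(\sum_l d_l-N-1)\prod_l d_l$. Genus $0$ occurs only for the conic, giving (i). For genus $1$, i.e.\ $\sum_l d_l=N+1$ with the plane cubic or the $(2,2)$-curve in $\mathbb{P}^3$, the neutral component of $\Aut_k(Z)$ is the elliptic curve $Z$ acting by translations, whereas the polarization-preserving automorphisms form an extension of the origin-fixing automorphisms by the torsion subgroup scheme $\ker\phi_{\mathcal{O}(1)}=Z[\deg\mathcal{O}(1)]$, of degree $3$ resp.\ $4$; this torsion group scheme, hence $\Aut_k(Z,\mathcal{O}(1))$, is reduced exactly when the characteristic does not divide the degree, yielding the non-reducedness in characteristic $3$ resp.\ $2$ of (ii). For genus $\geq 2$, $H^0(Z,T_Z)=0$ makes both group schemes finite and reduced, and they may differ since a curve automorphism need not fix the very ample class $\mathcal{O}(1)$, giving (iii). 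The main obstacle throughout is the uniform vanishing $H^0(Z,T_Z)=0$ outside the short exceptional list, together with the exact determination of the characteristics in which a finite $\Aut_k(Z,\mathcal{O}(1))$ fails to be reduced: this is an explicit analysis of linear vector fields tangent to $Z$ that must be carried out with care near the boundary multidegrees and separately in small characteristic.
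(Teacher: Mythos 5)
Your architecture largely parallels the paper's: reduction to $\bar{k}$, the identification of $\Aut_k(Z,\mathcal{O}(1))$ with a closed subgroup scheme of $\PGL_{N+1,k}$, finiteness extracted from the separation theorem \ref{principal} (proper plus affine implies finite --- this is exactly how the paper handles case (v), where it explicitly says no simpler argument is available, and your idea of using it uniformly outside the quadric case is legitimate since Theorem \ref{principal} is proved independently), the Grothendieck--Lefschetz argument for $n\geq 3$, the torsion subgroup $E[3]$ resp.\ $E[4]$ for genus-$1$ curves, and the K3 discussion. However, there is a genuine gap at the heart of the proof: the vanishing $H^0(Z,T_Z)=0$ outside the exceptional list is never proved, only announced (``I would then prove that this kernel reduces to $0$\dots by exploiting smoothness through the Jacobian criterion''). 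This vanishing is the main technical content of the theorem. The paper proves it (Proposition \ref{champdev}) not by analyzing linear vector fields, but cohomologically: Serre duality turns $H^0(Z,T_Z)$ into $H^n(Z,\Omega^1_Z(d_1+\dots+d_c-N-1))^\vee$, which is killed by characteristic-free Bott-type vanishing (Deligne, SGA7, Lemma \ref{Bott}) propagated to $Z$ via the conormal filtration and Koszul spectral sequence (Lemmas \ref{annulation}, \ref{annulationbis}, \ref{annulationter}), with the boundary multidegrees (cubic hypersurfaces, intersections of two quadrics of even dimension) requiring separate lemmas. Your claim that ``the non-degeneracy forced by smoothness leaves no room for $A$ beyond scalars'' cannot be a soft consequence of the Jacobian criterion: it is \emph{false} precisely in cases (ii) and (v), where $Z$ is smooth and yet nonzero tangent vectors exist, so any explicit argument must detect the multidegree and the characteristic. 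In effect you are proposing to redo the Matsumura--Monsky-type analysis for complete intersections in arbitrary characteristic, which is the hard part, and it is left entirely unexecuted.

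The same criticism applies to the non-reducedness in case (v): you assert a ``$\mu_2$- or $\alpha_2$-type contribution'' in characteristic $2$, but producing an actual nonzero tangent vector is a computation. The paper does it (Proposition \ref{quadrdeux}) by putting the pencil in Bhosle's normal form $q=\sum_i X_iY_i$, $q'=\sum_i a_iX_iY_i+c_iX_i^2+d_iY_i^2$ and exhibiting explicit diagonal matrices giving a tangent space of dimension $\geq\frac{N-1}{2}$; the companion computation in characteristic $\neq 2$ (Proposition \ref{quadrdiago}, via simultaneous diagonalization) is what shows $\Aut_k(Z,\mathcal{O}(1))$ is reduced there, since for $N$ odd the cohomological vanishing is unavailable. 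Without these two computations and without a proof of the vanishing of $H^0(Z,T_Z)$, your text is a correct and well-organized plan whose decisive steps remain to be supplied.
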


\subsubsection{Cas des hypersurfaces}\label{cashyp}

Le cas des hypersurfaces est classique. En caract\'eristique nulle, il est trait\'e par Kodaira et Spencer dans \cite{KS} 14.2. En caract\'eristique positive, 
la finitude des groupes d'automorphismes 
est \'etudi\'ee dans \cite{MMonsky}. On trouvera une discussion tr\`es d\'etaill\'ee incluant les probl\`emes de r\'eduction dans \cite{KatzSarnak}
11.5, 11.6, 11.7.

\subsubsection{Codimension sup\'erieure}

Les arguments en codimension sup\'erieure sont analogues.
D'une part, il faut \'etendre aux intersections compl\`etes lisses
le calcul fait dans \cite{KS} des champs de vecteurs sur une hypersurface lisse. C'est l'objet du paragraphe \ref{cdv}.
D'autre part, un ph\'enom\`ene nouveau appara\^it : la non r\'eduction de $\Aut_k(Z)$ 
quand $N\geq 5$ est impair, $c=2$, $d_1=d_2=2$ et $p=2$ (cas (v) ci-dessus). On traite ce cas \`a l'aide de calculs explicites
au paragraphe \ref{partieicquadriques}. La preuve proprement dite du th\'eor\`eme \ref{autoic} se trouve au paragraphe \ref{preuveautoic}.

Dans le cas (ii) ci-dessus, les automorphismes infinit\'esimaux sont facilement explicables : ce sont 
des automorphismes de translation de la courbe de genre $1$. Il serait int\'eressant d'obtenir \'egalement dans le cas (v)
une description g\'eom\'etrique de ces automorphismes infinit\'esimaux. Pourrait-on m\^eme identifier la composante connexe de l'identit\'e de 
$\Aut_k(Z)$ ?

\subsection{Preuve du th\'eor\`eme}

\subsubsection{Champs de vecteurs sur les intersections com\-pl\`etes lisses}\label{cdv}

Soit $Z$ une intersection compl\`ete lisse sur $k$. L'objectif de ce paragraphe est la proposition \ref{champdev} : on montre que, 
\`a quelques exceptions \'eventuelles pr\`es, $Z$
n'admet pas de champs de vecteurs globaux non triviaux. La preuve, qui \'etend celle de \cite{KS} pour les hypersurfaces,
consiste en un calcul de cohomologie de faisceaux de formes diff\'erentielles.

Rappelons tout d'abord le th\'eor\`eme d'annulation suivant sur $\mathbb{P}^N$. 
En caract\'eristique $0$, c'est une cons\'equence du th\'eor\`eme d'annulation de Bott. 
On peut en trouver une preuve de Deligne, ind\'ependante de la caract\'eristique, dans \cite{SGA7} Expos\'e XI, Th. 1.1.

\begin{lemme}\label{Bott}
On a $H^q(\mathbb{P}^N_k,\Omega_{\mathbb{P}^N_k}^p(l))=0$ sauf dans les trois cas suivants :
\begin{enumerate}[(i)]
\item $p=q$ et $l=0$,
\item $q=0$ et $l>p$,
\item $q=N$ et $l<p-N$.
\end{enumerate}
\end{lemme}

On peut en d\'eduire des th\'eor\`emes d'annulation sur $Z$.

\begin{lemme}\label{annulation}
Soit $Z$ une intersection compl\`ete lisse sur $k$.

 Si $p+q>n$ et $l>p-q$, on a $H^q(Z,\Omega_Z^p(l))=0$.
\end{lemme}

\begin{proof}[$\mathbf{Preuve}$]
On raisonne par r\'ecurrence sur $q$. Si $q=0$, on a $p>n$, de sorte que $\Omega_Z^p=0$ et l'annulation est \'evidente.

Consid\'erons la $(p+c)$-\`eme puissance ext\'erieure de la suite exacte courte $0\to N_{Z/\mathbb{P}^N_k}^*\to \Omega_{\mathbb{P}^N_k}|_Z\to\Omega_Z\to 0$.
On obtient une filtration de $\Omega^{p+c}_{\mathbb{P}^N_k}|_Z$ dont les gradu\'es successifs sont les faisceaux localement libres :
$$\Omega_Z^{p+t}\otimes\bigwedge^{c-t}N_{Z/\mathbb{P}^N_k}^*=\bigoplus_{1\leq j_1<\ldots<j_{c-t}\leq c}\Omega_Z^{p+t}(-d_{j_1}-\ldots-d_{j_{c-t}}),\text{ }t\geq 0.$$

Tensorisons ce faisceau localement libre filtr\'e par $\mathcal{O}(l+d_1+\ldots+d_c)$, de sorte que le gradu\'e correspondant \`a $t=0$ soit $\Omega^p_Z(l)$.
En d\'evissant cette filtration en suite exactes courtes de faisceaux
localement libres, et en \'ecrivant les suites exactes longues de cohomologie associ\'ees \`a ces suites
exactes courtes, on est ramen\'es, pour montrer l'annulation d\'esir\'ee, \`a v\'erifier les annulations suivantes :
\begin{enumerate}[(i)]
 \item Pour $t\geq 1$ et $1\leq j_1<\ldots<j_{t}\leq c$, $H^{q-1}(Z, \Omega_Z^{p+t}(d_{j_1}+\ldots+d_{j_{t}}+l))=0$.
 \item  $H^q(Z,\Omega^{p+c}_{\mathbb{P}^N_k}|_Z(l+d_1+\ldots+d_c))=0$.
\end{enumerate}

Pour les premi\`eres annulations, on peut appliquer l'hypoth\`ese de r\'ecurrence. Les hypoth\`eses sont v\'erifi\'ees car $p+t+q-1\geq p+q>n$ et 
$d_{j_1}+\ldots+d_{j_{t}}+l\geq l+t+1>p+t-q+1$.

Pour la seconde annulation, on dispose de la r\'esolution de Koszul de $\mathcal{O}_Z$ :
$$0\to \mathcal{K}^{-c}\to \ldots\to\mathcal{K}^0\to \mathcal{O}_Z\to 0,$$
o\`u $\mathcal{K}^{-r}=\bigwedge^r(\oplus_{i=1}^c\mathcal{O}(-d_i))=\bigoplus_{1\leq j_1<\ldots<j_{r}\leq c}\mathcal{O}(-d_{j_1}-\ldots-d_{j_r})$.
Tensorisons cette r\'esolution par le faisceau localement libre $\Omega^{p+c}_{\mathbb{P}^N_k}(l+d_1+\ldots+d_c)$, et consid\'erons la suite spectrale 
d'hypercohomologie associ\'ee :
$$ E_1^{r,s}=H^s(\mathbb{P}^N_k,\mathcal{K}^r\otimes\Omega^{p+c}_{\mathbb{P}^N_k}(l+d_1+\ldots+d_c))
\Rightarrow H^{r+s}(Z,\Omega^{p+c}_{\mathbb{P}^N_k}|_Z(l+d_1+\ldots+d_c)).$$
Il s'ensuit que pour montrer l'annulation voulue, il suffit de v\'erifier l'annulation des 
$H^{q+r}(\mathbb{P}^N_k,\Omega^{p+c}_{\mathbb{P}^N_k}(l+d_{j_1}+\ldots+d_{j_{c-r}}))$ pour $0\leq r\leq c$ et $1\leq j_1<\ldots<j_{c-r}\leq c$. Pour cela, montrons
que le lemme \ref{Bott} s'applique.

Si l'on \'etait cans le cas (i), on pourrait \'ecrire $l=-d_{j_1}-\ldots-d_{j_{c-r}}\leq -2(c-r)=(p-q)-(c-r)\leq p-q$, ce qui est absurde. 
Si l'on \'etait dans le cas (ii),
on aurait $q+r=0$ donc $q=0$, mais ce cas a \'et\'e trait\'e comme initialisation de la r\'ecurrence. 
Enfin, si l'on \'etait dans le cas (iii), il viendrait : $p-q<l\leq l+d_{j_1}+\ldots+d_{j_{c-r}}<p+c-N=p-n$, de sorte que $q>n$ et que l'annulation de
$H^q(Z,\Omega_Z^p(l))$ \'etait \'evidente.
\end{proof}

La strat\'egie de d\'emonstration du lemme ci-dessus permet de montrer l'annulation d'autres groupes de cohomologie. 
Les deux lemmes qui suivent en sont des exemples, dont on aura besoin. Le premier de ces lemmes concerne 
les hypersurfaces cubiques de dimension au moins $2$.

\begin{lemme}\label{annulationbis}
Supposons que $c=1$, $d_1=3$ et $N\geq 3$. 
Soit $Z$ une intersection compl\`ete lisse sur $k$.

Alors $H^{N-1}(Z,\Omega^1_{Z}(2-N))=0$.
\end{lemme}

\begin{proof}[$\mathbf{Preuve}$]
 On proc\`ede de la m\^eme mani\`ere que dans la preuve du lemme
\ref{annulation}. Par l'argument de filtration, on est ramen\'es \`a montrer l'annulation des deux groupes de cohomologie 
$H^{N-2}(Z,\Omega_Z^2(5-N))$ et $H^{N-1}(Z,\Omega^2_{\mathbb{P}^N_k}|_Z(5-N))$. Pour le premier, on peut appliquer le lemme \ref{annulation}. 
Pour le second, on proc\`ede de la m\^eme mani\`ere qu'en \ref{annulation} : par l'argument de suite spectrale de Koszul, 
on est ramen\'es \`a montrer l'annulation des deux groupes de cohomologie 
$H^{N-1}(\mathbb{P}^N_k,\Omega_{\mathbb{P}^N_k}^2(5-N))$ et $H^{N}(\mathbb{P}^N_k,\Omega^2_{\mathbb{P}^N_k}(2-N))$. Le lemme \ref{Bott} montre qu'ils s'annulent, sauf
le second si $N=2$.
\end{proof}

Le second de ces lemmes concerne les intersections de deux quadriques de dimension paire.

\begin{lemme}\label{annulationter}
Supposons que $c=2$, $d_1=d_2=2$ et que $N$ est pair. 
Soit $Z$ une intersection compl\`ete lisse sur $k$.

Alors $H^{N-2}(Z,\Omega^1_{Z}(3-N))=0$.
\end{lemme}

\begin{proof}[$\mathbf{Preuve}$]
  On va en fait prouver l'\'enonc\'e plus g\'en\'eral suivant : si $c=2$, $d_1=d_2=2$ et $i\geq 0$,  $H^{N-2-i}(Z,\Omega^{1+i}_{Z}(3+2i-N))=0$,
de sorte qu'on obtient le r\'esultat voulu en faisant $i=0$.
  La preuve proc\`ede par r\'ecurrence descendante sur $i$. L'initialisation de la r\'ecurrence
est facile : si $i\geq N-2$, le faisceau $\Omega^{1+i}_Z$ est nul par dimension. Supposons l'annulation v\'erifi\'ee pour $i+1$, et cherchons \`a
la montrer pour $i$.

On proc\`ede de la m\^eme mani\`ere que dans la preuve du lemme
\ref{annulation}. Par l'argument de filtration, on est ramen\'es \`a montrer l'annulation des trois groupes de cohomologie 
$H^{N-3-i}(Z,\Omega_Z^{2+i}(5+2i-N))$, $H^{N-3-i}(Z,\Omega_Z^{3+i}(7+2i-N))$ et $H^{N-2-i}(Z,\Omega^{3+i}_{\mathbb{P}^N_k}|_Z(7+2i-N))$. Le premier s'annule 
par hypoth\`ese de r\'ecurrence, le second s'annule par le lemme \ref{annulation}. Pour montrer l'annulation du troisi\`eme, on proc\`ede 
toujours comme dans la preuve du lemme
\ref{annulation} : en utilisant la suite spectrale de Koszul, on est ramen\'es \`a montrer l'annulation des trois groupes de cohomologie suivants :
$H^{N-2-i}(\mathbb{P}^N_k,\Omega_{\mathbb{P}^N_k}^{3+i}(7+2i-N))$, $H^{N-1-i}(\mathbb{P}^N_k,\Omega_{\mathbb{P}^N_k}^{3+i}(5+2i-N))$ et
$H^{N-i}(\mathbb{P}^N_k,\Omega_{\mathbb{P}^N_k}^{3+i}(3+2i-N))$. Pour cela, appliquons le lemme \ref{Bott}. Le cas (i) n'arrive que pour le troisi\`eme de ces groupes et 
$N=2i+3$, ce qui est impossible car $N$ est suppos\'e pair. Le cas (ii) n'intervient pas car $i<N-2$, et on v\'erifie facilement qu'on n'est jamais dans le cas (iii).
Cela conclut.
\end{proof}

On peut finalement montrer :

\begin{prop}\label{champdev}
Supposons qu'on n'a pas $c=1$ et $d_1=2$, ni $N=2$, $c=1$ et $d_1=3$, ni $N$ impair, $c=2$ et $d_1=d_2=2$. 
Soit $Z$ une intersection compl\`ete lisse sur $k$.

 Alors $H^0(Z,T_Z)=0$.
\end{prop}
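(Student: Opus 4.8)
Le plan est de ramener l'\'enonc\'e, par dualit\'e de Serre, \`a une annulation de cohomologie des formes diff\'erentielles, puis d'invoquer selon les cas les lemmes \ref{annulation}, \ref{annulationbis} ou \ref{annulationter}. Je commencerais par observer que, par adjonction pour les intersections compl\`etes, le fibr\'e canonique de $Z$ est $\omega_Z=\mathcal{O}_Z(e)$ avec $e=d_1+\ldots+d_c-N-1$. Comme $Z$ est lisse projectif de dimension $n$, la dualit\'e de Serre fournit un isomorphisme
$$H^0(Z,T_Z)\simeq H^n(Z,\Omega^1_Z\otimes\omega_Z)^\vee=H^n(Z,\Omega^1_Z(e))^\vee,$$
de sorte qu'il suffit de montrer l'annulation de $H^n(Z,\Omega^1_Z(e))$.

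Je poserais ensuite $S=\sum_{i=1}^c(d_i-1)$, qui v\'erifie $S\geq c\geq 1$ puisque chaque $d_i\geq 2$, et je distinguerais les cas selon la valeur de $S$. Le cas principal est $S\geq 3$ : on applique le lemme \ref{annulation} avec $p=1$, $q=n$ et $l=e$. La condition $p+q>n$ y est automatique, et un calcul imm\'ediat montre que la condition $l>p-q$, c'est-\`a-dire $e>1-n$, \'equivaut exactement \`a $S>2$. Elle est donc satisfaite et l'annulation voulue en r\'esulte. Le cas $S=1$ ne se produit jamais sous les hypoth\`eses, car il forcerait $c=1$ et $d_1=2$, soit le cas des quadriques qui est exclu.

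La partie \`a traiter avec soin est le cas limite $S=2$, o\`u le lemme \ref{annulation} ne s'applique plus : comme $d_i\geq 2$, il ne reste que deux possibilit\'es, et chacune correspond pr\'ecis\'ement \`a l'un des lemmes raffin\'es. Si $c=1$ et $d_1=3$, alors $e=2-N$ et $n=N-1$ ; l'hypoth\`ese excluant $N=2$ garantit $N\geq 3$, et l'annulation de $H^{N-1}(Z,\Omega^1_Z(2-N))$ est exactement le lemme \ref{annulationbis}. Si $c=2$ et $d_1=d_2=2$, alors $e=3-N$ et $n=N-2$ ; l'hypoth\`ese excluant le cas $N$ impair garantit que $N$ est pair, et l'annulation de $H^{N-2}(Z,\Omega^1_Z(3-N))$ est exactement le lemme \ref{annulationter}. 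Le point d\'elicat est ainsi de v\'erifier que les exclusions figurant dans l'\'enonc\'e ($N=2$ pour les cubiques, $N$ impair pour les intersections de deux quadriques) co\"incident rigoureusement avec les situations o\`u ces lemmes ne s'appliquent pas et o\`u l'annulation tombe effectivement en d\'efaut (courbes de genre $1$, et intersections de deux quadriques de dimension impaire). Une fois cette correspondance \'etablie, les trois cas \'epuisent les hypoth\`eses et l'annulation de $H^n(Z,\Omega^1_Z(e))$, donc celle de $H^0(Z,T_Z)$, s'ensuit.
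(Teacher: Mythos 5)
Your proposal is correct and follows essentially the same route as the paper's proof: Serre duality reduces the statement to the vanishing of $H^n(Z,\Omega^1_Z(d_1+\ldots+d_c-N-1))$, Lemma \ref{annulation} (with $p=1$, $q=n$) handles the generic case, which in your notation is $S=\sum_i(d_i-1)>2$ and in the paper's is $d_1+\ldots+d_c-c-2>0$, and the two boundary cases $S=2$ are dispatched exactly as in the paper by Lemmas \ref{annulationbis} and \ref{annulationter}. Your bookkeeping via the parameter $S$ is only a cosmetic repackaging of the same numerical condition, so there is nothing substantive to compare.
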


\begin{proof}[$\mathbf{Preuve}$]
Par dualit\'e de Serre, $H^0(Z,T_Z)=H^n(Z,\Omega^1_Z(d_1+\ldots+d_c-N-1))^{\vee}$. Cherchons \`a annuler ce groupe de cohomologie \`a l'aide du lemme
\ref{annulation}. La premi\`ere condition $p+q=1+n>n$ est trivialement v\'erifi\'e. Pour la seconde, on remarque que $l+q-p=d_1+\ldots+d_c-c-2>0$ sauf si 
$c=1$ et $d_1=2$ ou $3$, ou si $c=2$ et $d_1=d_2=2$. 

Si $c=1$, $d_1=3$ et $N\geq 3$, on peut appliquer le lemme \ref{annulationbis}.  Si $c=2$, $d_1=d_2=2$ et $N$ est pair, on peut appliquer
le lemme \ref{annulationter}.
\end{proof}

\subsubsection{Intersections de deux quadriques}\label{partieicquadriques}

Dans ce paragraphe, on effectue des calculs sur les intersections de deux quadriques par manipulation explicite de leurs \'equations.

\begin{prop}\label{quadrdiago}
Soient $k$ un corps alg\'ebriquement clos de caract\'eristique $\neq 2$, $N\geq 3$ et
$Z\subset\mathbb{P}^N_k$ une intersection compl\`ete lisse de deux quadriques.
Alors l'espace tangent en $\Id$ de $\Aut_k(Z,\mathcal{O}(1))$ est trivial.
\end{prop}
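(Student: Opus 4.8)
Le plan est de traduire le calcul de l'espace tangent en un probl\`eme d'alg\`ebre lin\'eaire sur les matrices sym\'etriques, puis de le r\'esoudre apr\`es diagonalisation simultan\'ee du pinceau de quadriques. D'abord, j'utiliserais l'identification du paragraphe \ref{autoproj} : $\Aut_k(Z,\mathcal{O}(1))$ est le sous-sch\'ema en groupes ferm\'e $G\subset PGL_{N+1,k}$ des automorphismes projectifs $f$ tels que $f(Z)=Z$. Soient $F_1,F_2\in H^0(\mathbb{P}^N_k,\mathcal{O}(2))$ deux \'equations de $Z$ : par \ref{propic} (iii), l'espace $H^0(\mathbb{P}^N_k,\mathcal{I}_Z(2))$ est exactement $\mathrm{Vect}(F_1,F_2)$. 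L'espace tangent en $\Id$ de $PGL_{N+1,k}$ s'identifie \`a $\mathfrak{gl}_{N+1}/k\Id$ ; une classe $A$ y induit le champ lin\'eaire agissant par $X\mapsto(\Id+\epsilon A)X$ sur $k[\epsilon]/\epsilon^2$. Un tel $A$ appartient \`a $T_{\Id}G$ si et seulement si ce flot \`a l'ordre $1$ stabilise $\mathrm{Vect}(F_1,F_2)$. En \'ecrivant chaque quadrique par sa matrice sym\'etrique $Q_i$ (licite car $\car(k)\neq 2$) et en observant que la variation de $F_i$ a pour matrice $A^tQ_i+Q_iA$, cette condition devient $A^tQ_i+Q_iA\in\mathrm{Vect}(Q_1,Q_2)$ pour $i=1,2$.

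Ensuite, je r\'eduirais le probl\`eme par diagonalisation simultan\'ee. Comme $\car(k)\neq 2$ et que $Z$ est lisse, on peut choisir les coordonn\'ees de sorte que $Q_1=\Id$ et $Q_2=D=\mathrm{diag}(\lambda_0,\ldots,\lambda_N)$, avec les $\lambda_i$ deux \`a deux distincts ; c'est le fait classique selon lequel les quadriques singuli\`eres du pinceau correspondent aux racines de $\det(s\,Q_1+t\,Q_2)$, la lissit\'e de $Z$ \'equivalant \`a la simplicit\'e de ces racines. Les matrices $\Id$ et $D$ \'etant diagonales, les deux conditions ci-dessus affirment alors simplement que $A+A^t$ et $A^tD+DA$ sont diagonales.

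Il resterait un calcul \'el\'ementaire. Pour $i\neq j$, les coefficients $(i,j)$ donnent $a_{ij}+a_{ji}=0$ et $\lambda_i a_{ij}+\lambda_j a_{ji}=0$, syst\`eme de d\'eterminant $\lambda_j-\lambda_i\neq 0$ ; donc $a_{ij}=0$ et $A$ est diagonale. En \'ecrivant $A+A^t=\mu_1\Id+\mu_2 D$ et $A^tD+DA=\nu_1\Id+\nu_2 D$, on tire $2a_i=\mu_1+\mu_2\lambda_i$ et $2a_i\lambda_i=\nu_1+\nu_2\lambda_i$, puis, en \'eliminant $a_i$, que le polyn\^ome $\mu_2 t^2+(\mu_1-\nu_2)t-\nu_1$, de degr\'e au plus $2$, s'annule en les $N+1\geq 4>2$ valeurs distinctes $\lambda_i$. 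Il est donc nul, d'o\`u $\mu_2=\nu_1=0$ et $\mu_1=\nu_2$, puis $2a_i=\mu_1$ pour tout $i$ : $A$ est scalaire, c'est-\`a-dire nulle dans $\mathfrak{gl}_{N+1}/k\Id$, ce qui donne la trivialit\'e voulue.

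La difficult\'e principale sera moins ce calcul final que la mise en place correcte des deux premi\`eres \'etapes : identifier pr\'ecis\'ement la condition tangente comme la stabilit\'e \`a l'ordre $1$ du pinceau $\mathrm{Vect}(F_1,F_2)$, et disposer de la forme normale diagonale \`a valeurs propres distinctes fournie par la lissit\'e en caract\'eristique $\neq 2$. Une fois ces deux points acquis, l'hypoth\`ese $N\geq 3$ n'intervient que pour garantir $N+1>2$, rendant l'argument polynomial concluant.
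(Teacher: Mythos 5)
Votre preuve est correcte et suit essentiellement la m\^eme d\'emarche que celle de l'article : m\^eme forme normale (diagonalisation simultan\'ee du pinceau en $q=\sum X_i^2$, $q'=\sum \lambda_i X_i^2$ \`a valeurs propres distinctes, que l'article tire de Wittenberg, Prop. 3.28), m\^eme traduction de la condition tangente en la stabilisation \`a l'ordre $1$ du module $\mathrm{Vect}(q,q')$ via \ref{propic} (iii), et m\^eme conclusion par annulation d'un polyn\^ome de degr\'e $\leq 2$ admettant $N+1$ racines distinctes. La seule diff\'erence est cosm\'etique : l'article rel\`eve le vecteur tangent de $PGL_{N+1}$ \`a $GL_{N+1}$ par la suite exacte $0\to\mathbb{G}_m\to G\to PG\to 0$ et conclut que $M$ est une homoth\'etie, l\`a o\`u vous travaillez directement dans $\mathfrak{gl}_{N+1}/k\Id$.
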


\begin{proof}[$\mathbf{Preuve}$]
Comme $Z$ est lisse, on peut appliquer \cite{Wittenberg} Proposition
3.28 : on obtient un syst\`eme de coordonn\'ees dans lequel $Z=\{q=q'=0\}$ avec
$q=X_0^2+\dots+X_N^2$, $q'=a_0X_0^2+\dots+a_NX_N^2$, et o\`u les $a_i$ sont
distincts.

Notons $PG=\Aut_k(Z,\mathcal{O}(1))$ et $G$ son image r\'eciproque dans $GL_{N+1}$ : on a une suite exacte courte $0\to\mathbb{G}_m\to G\to PG\to 0$.
On note $A=k[\varepsilon]/\varepsilon^2$ les nombres duaux de sorte que l'espace tangent \`a $GL_{N+1}$ en $\Id$ s'identifie aux matrices de la forme
$(\Id+\varepsilon M)\in GL_{N+1}(A)$. Soit $v$ un vecteur tangent \`a $PG$ en $\Id$. Par lissit\'e de $G\to PG$, on le rel\`eve en
$g=\Id+\varepsilon M\in G(A)$ un vecteur tangent \`a $G$ en $\Id$. On note $(m_{i,j})_{0\leq i,j\leq N}\in k$
les coefficients de la matrice $M$.

Par la proposition \ref{propic} (iii), $H^0(\mathbb{P}^N_k,\mathcal{I}_Z(2))$ est de dimension $2$, engendr\'e par $q$ et $q'$. 
Ainsi, par changement de base par le morphisme 
plat $k\to A$, $H^0(\mathbb{P}^N_A,\mathcal{I}_{Z_A}(2))$ est un $A$-module libre de rang $2$, engendr\'e par $q$ et $q'$.
La matrice $g=\Id+\varepsilon M$ agit sur $H^0(\mathbb{P}^N_A,\mathcal{O}(2))$ en pr\'eservant ce sous-module, de sorte que $q\circ g$
et $q'\circ g$ sont combinaisons \`a coefficients dans $A$ de $q$ et $q'$. En calculant les termes constants (sans $\varepsilon$), on voit qu'il
existe $\alpha,\beta,\gamma,\delta\in k$ tels que ces
relations soient de la forme suivante :
\begin{equation}\label{relq}
q\circ g=(1+\varepsilon\alpha)q+\varepsilon\beta q'
\end{equation}
\begin{equation}\label{relq'}
q'\circ g=\varepsilon\gamma q+(1+\varepsilon\delta)q'
\end{equation}

Comme aucun mon\^ome $X_iX_j$ avec $i\neq j$ n'intervient dans le terme de droite de (\ref{relq}), on obtient $m_{i,j}+m_{j,i}=0$ pour $i\neq j$. 
Proc\'edant de m\^eme avec (\ref{relq'}), on obtient $a_im_{i,j}+a_jm_{j,i}=0$ pour $i\neq j$. 
Comme $a_i\neq a_j$ pour $i\neq j$, cela montre $m_{i,j}=0$ pour $i\neq j$.

La relation (\ref{relq}) s'\'ecrit alors $m_{i,i}=\alpha+\beta a_i$ pour $0\leq i\leq N$. De m\^eme, la relation
(\ref{relq'}) s'\'ecrit $a_im_{i,i}=\gamma+\delta a_i$ pour $0\leq i\leq N$. De ces deux relations, il vient que, pour
$0\leq i\leq N$, $\beta a_i^2+(\alpha-\delta)a_i-\gamma=0$. Les $a_i$ sont alors $N+1$ racines distinctes d'un polyn\^ome de 
degr\'e $2$. Ce polyn\^ome est donc nul : en particulier, $\beta=0$ et $m_{i,i}=\alpha$ pour tout $i$.

On a montr\'e que $M$ est une homoth\'etie, donc que $g$ est en fait tangent \`a $\mathbb{G}_m$. Par cons\'equent, $v=0$, et
l'espace tangent de $PG$ en $\Id$ est bien trivial.
\end{proof}

\begin{prop}\label{quadrdeux}
Soient $k$ un corps alg\'ebriquement clos de caract\'eristique $2$, $N\geq 3$ impair, et
$Z\subset\mathbb{P}^N_k$ une intersection compl\`ete lisse de deux quadriques.
Alors l'espace tangent en $\Id$ de $\Aut_k(Z,\mathcal{O}(1))$ est de dimension $\geq \frac{N-1}{2}$.
\end{prop}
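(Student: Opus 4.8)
Le plan est de reprendre la strat\'egie de la preuve de la proposition \ref{quadrdiago}, mais en exhibant cette fois, gr\^ace \`a la caract\'eristique $2$, un espace de vecteurs tangents non triviaux. La premi\`ere \'etape consiste \`a se ramener \`a une forme normale simultan\'ee pour le pinceau $\langle q,q'\rangle$. Comme $N+1$ est pair et que les formes polaires $b_q,b_{q'}$ sont altern\'ees en caract\'eristique $2$, la lissit\'e de $Z$ devrait entra\^iner que le pinceau d'applications altern\'ees $\lambda b_q+\mu b_{q'}$ est non d\'eg\'en\'er\'e et se scinde en blocs $2\times 2$ (th\'eorie de Kronecker des pinceaux de formes altern\'ees, ou l'analogue en caract\'eristique $2$ de \cite{Wittenberg} Proposition 3.28). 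On obtient ainsi, en posant $N=2m+1$, un syst\`eme de coordonn\'ees dans lequel $q$ et $q'$ se d\'ecomposent simultan\'ement en formes binaires sur les couples de variables $(X_{2i},X_{2i+1})$, $0\leq i\leq m$, avec par exemple $q=\sum_{i=0}^m X_{2i}X_{2i+1}$. Le seul point que j'utiliserais ensuite est qu'aucun mon\^ome crois\'e entre deux blocs distincts n'intervient ni dans $q$ ni dans $q'$.

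On reprend alors le formalisme de \ref{quadrdiago} : on note $PG=\Aut_k(Z,\mathcal{O}(1))$, $G\subset GL_{N+1}$ son image r\'eciproque, et $A=k[\varepsilon]/\varepsilon^2$. Puisque $H^0(\mathbb{P}^N_k,\mathcal{I}_Z(2))$ est de dimension $2$ engendr\'e par $q$ et $q'$ (proposition \ref{propic} (iii)), son changement de base \`a $A$ est libre de rang $2$, et un \'el\'ement $g=\Id+\varepsilon M\in GL_{N+1}(A)$ appartient \`a $G(A)$ d\`es que $q\circ g$ et $q'\circ g$ sont combinaisons de $q$ et $q'$ \`a coefficients dans $A$.

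Je proposerais ensuite de consid\'erer les matrices diagonales par blocs $g=\mathrm{diag}(1+\varepsilon s_0,1+\varepsilon s_0,\ldots,1+\varepsilon s_m,1+\varepsilon s_m)$, param\'etr\'ees par $(s_0,\ldots,s_m)\in k^{m+1}$. Sous $g$, chaque variable du bloc $i$ est multipli\'ee par $1+\varepsilon s_i$, de sorte que tout mon\^ome de degr\'e $2$ ne faisant intervenir que les variables de ce bloc est multipli\'e par $(1+\varepsilon s_i)^2=1+2\varepsilon s_i=1$, la derni\`ere \'egalit\'e utilisant de mani\`ere essentielle que $\car(k)=2$ (c'est exactement ce qui \'echoue en caract\'eristique $\neq 2$, en accord avec \ref{quadrdiago}). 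Comme la forme normale assure que $q$ et $q'$ ne comportent que de tels mon\^omes, on obtient $q\circ g=q$ et $q'\circ g=q'$ ; ainsi $g$ fixe les deux g\'en\'erateurs du pinceau, donc $g\in G(A)$. On dispose de la sorte d'un sous-espace de dimension $m+1$ de l'espace tangent \`a $G$ en $\Id$.

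Il resterait \`a redescendre sur $PG$ : dans la suite exacte $0\to\mathbb{G}_m\to G\to PG\to 0$, le morphisme $G\to PG$ \'etant lisse, $T_{\Id}G\to T_{\Id}PG$ est surjectif de noyau $\mathrm{Lie}(\mathbb{G}_m)$, engendr\'e par la matrice scalaire $\Id$ (cas $s_0=\ldots=s_m$). Le sous-espace de dimension $m+1$ construit contient cette direction scalaire ; il se projette donc sur un sous-espace de dimension au moins $(m+1)-1=m=\frac{N-1}{2}$ de $T_{\Id}PG$, ce qui donne l'\'enonc\'e. L'obstacle principal me semble \^etre la premi\`ere \'etape, \`a savoir \'etablir proprement la d\'ecomposition simultan\'ee en blocs \`a partir de la lissit\'e en caract\'eristique $2$ ; une fois celle-ci acquise, tout le reste n'est qu'une v\'erification directe o\`u la caract\'eristique $2$ fait l'essentiel du travail via l'identit\'e $(1+\varepsilon s)^2=1$.
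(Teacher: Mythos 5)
Votre calcul central (\'etapes 2 \`a 4) est correct et co\"incide avec celui du texte : une fois acquise la forme normale par blocs, les matrices $g=\Id+\varepsilon M$ o\`u $M$ est scalaire sur chaque bloc pr\'eservent $q$ et $q'$ gr\^ace \`a l'identit\'e $(1+\varepsilon s)^2=1$ en caract\'eristique $2$, et le passage au quotient par $\mathbb{G}_m$ donne bien la borne $\frac{N+1}{2}-1=\frac{N-1}{2}$. La lacune, que vous signalez vous-m\^eme comme l'obstacle principal, est votre premi\`ere \'etape, et elle est r\'eelle : en caract\'eristique $2$, la lissit\'e de $Z$ n'entra\^ine \emph{pas} que le pinceau de formes altern\'ees $\lambda b_q+\mu b_{q'}$ se scinde en blocs $2\times 2$. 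Contre-exemple dans $\mathbb{P}^3_k$, coordonn\'ees $[X_1:X_2:Y_1:Y_2]$ : prenez $q=X_1Y_1+X_2Y_2+X_1^2$ et $q'=X_2Y_1+Y_2^2$. Comme $dq=(Y_1,Y_2,X_1,X_2)$ et $dq'=(0,Y_1,X_2,0)$, le lieu o\`u ces deux diff\'erentielles sont li\'ees est la droite $\{X_2=Y_1=0\}$, sur laquelle les \'equations de $Z$ se restreignent en $X_1^2$ et $Y_2^2$ : l'intersection est vide, donc $Z$ est lisse. Pourtant $\pfaff(\lambda b_q+\mu b_{q'})=\lambda^2$ a une racine double, et l'endomorphisme $b_q^{-1}b_{q'}$ est nilpotent non nul, donc non diagonalisable ; or une d\'ecomposition simultan\'ee en blocs $2\times 2$ rendrait $b_q$ et $b_{q'}$ diagonales par blocs altern\'es, donc $b_q^{-1}b_{q'}$ diagonalisable, et cette propri\'et\'e est invariante par changement de coordonn\'ees (qui agit par conjugaison sur $b_q^{-1}b_{q'}$). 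Pour un tel $Z$, la forme normale que vous invoquez n'existe dans aucun syst\`eme de coordonn\'ees, et votre preuve ne peut pas s'appliquer telle quelle \`a tout $Z$ lisse.

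Le texte contourne cette difficult\'e par un argument absent de votre proposition : il suffit de d\'emontrer la proposition pour $Z$ \emph{g\'en\'erale}, car la dimension de l'espace tangent en $\Id$ du stabilisateur est une fonction semi-continue sup\'erieurement du point $[Z]$ du sch\'ema de Hilbert (irr\'eductible) des intersections lisses de deux quadriques sur $k$ ; le lieu o\`u cette dimension est $\geq\frac{N-1}{2}$ est donc ferm\'e, et s'il contient un ouvert dense il est tout. Pour $Z$ g\'en\'erale, on peut supposer que les racines du pfaffien sont distinctes, et c'est alors un r\'esultat de Bhosle (\cite{Bhosle} Coro. 2.10) --- et non la seule lissit\'e --- qui fournit la forme normale $q=\sum_i X_iY_i$, $q'=\sum_i a_iX_iY_i+c_iX_i^2+d_iY_i^2$ ; votre v\'erification s'applique alors mot pour mot (c'est celle du texte, avec $M=\mathrm{diag}(D,D)$, $D$ diagonale). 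Moyennant cet argument de semi-continuit\'e et la r\'ef\'erence \`a \cite{Bhosle}, votre preuve devient compl\`ete ; sans lui, elle reste bloqu\'ee \`a la premi\`ere \'etape.
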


\begin{proof}[$\mathbf{Preuve}$]

Soient $Z=\{q=q'=0\}$ des \'equations de $Z$. Notons $b$ et $b'$ les formes bilin\'eaires sym\'etriques associ\'ees aux formes quadratiques 
$q$ et $q'$. Comme la caract\'eristique de $k$ est $2$, le polyn\^ome $\det(\lambda b+\mu b')$, homog\`ene de degr\'e $N+1$ en $\lambda$ et $\mu$, est le
carr\'e du pfaffien $\pfaff(\lambda b+\mu b')$. Comme il suffit de montrer la proposition pour $Z$ g\'en\'erale, on peut supposer que les racines
de ce pfaffien sont distinctes.

Dans ce cas, on peut appliquer \cite{Bhosle} Coro. 2.10 : en notant $N+1=2r$, il existe un syst\`eme de coordonn\'ees dans lequel 
$Z=\{q=q'=0\}$ avec $q=\sum_{i=1}^r X_iY_i$ et $q'=\sum_{i=1}^r a_iX_iY_i+c_iX_i^2+d_iY_i^2$.

On raisonne alors comme dans la preuve de la proposition pr\'ec\'edente, dont on conserve les notations.
Un vecteur tangent \`a $GL_{N+1}$ en $\Id$ est un \'el\'ement $g=\Id+\varepsilon M\in GL_{N+1}(A)$. S'il pr\'eserve
le sous-$A$-module libre de rang $2$ de $H^0(\mathbb{P}^N_A,\mathcal{O}(2))$ engendr\'e par $q$ et $q'$, il pr\'eserve $Z_A=\{q=q'=0\}$ et est donc tangent
\`a $G$ en $\Id$. Or, si $D\in M_r(k)$ est diagonale, et si on note
\[
 M=
 \begin{pmatrix}
  D & 0  \\
  0 & D 
 \end{pmatrix},
\]
on v\'erifie par calcul que $g=\Id+\varepsilon M$ pr\'eserve ce sous-module, de sorte que $g$ est tangent \`a $G$ en $\Id$. Ceci montre que 
l'espace de tangent de $G$ en $\Id$ est de dimension $\geq r=\frac{N+1}{2}$. De la suite exacte $0\to\mathbb{G}_m\to G\to PG\to 0$, on
d\'eduit que l'espace tangent de $PG$ en $\Id$ est de dimension $\geq \frac{N-1}{2}$, comme voulu.
\end{proof}

\subsubsection{Fin de la preuve}\label{preuveautoic}

Montrons finalement le th\'eor\`eme \ref{autoic}.

\begin{proof}[$\mathbf{Preuve \text{ }du \text{ }th\acute{e}or\grave{e}me\text{ }\ref{autoic}}$]~

On se ram\`ene au cas o\`u $k$ est alg\'ebrique\-ment clos ; pour cela, on note $\bar{k}$ une cl\^oture alg\'ebrique de $k$.
Par description de leurs foncteurs des points,
$\Aut_{\bar{k}}(Z_{\bar{k}})=\Aut_k(Z)_{\bar{k}}$. Ainsi, le $k$-sch\'ema en groupes $\Aut_k(Z)$ est de dimension nulle
(resp. fini, resp. lisse, resp. de dimension
nulle et r\'eduit) 
si et seulement si le $\bar{k}$-sch\'ema en groupes $\Aut_{\bar{k}}(Z_{\bar{k}})$ l'est. 
Le seul point non trivial dans cette assertion est le fait que si $\Aut_k(Z)$ est r\'eduit de dimension nulle, $\Aut_{\bar{k}}(Z_{\bar{k}})$ est
r\'eduit. Mais si c'est le cas, la composante connexe de l'identit\'e de $\Aut_k(Z)$ est un $k$-sch\'ema connexe de dimension nulle donc ponctuel, 
r\'eduit donc isomorphe au spectre d'un corps, avec un $k$-point donc $k$-isomorphe \`a $\Spec(k)$.
Par cons\'equent, la composante connexe de l'identit\'e de $\Aut_{\bar{k}}(Z_{\bar{k}})$ est $\Spec(\bar{k})$. Par homog\'en\'eit\'e sous l'action des $\bar{k}$-points
de $\Aut_{\bar{k}}(Z_{\bar{k}})$, toutes ses composantes connexes sont isomorphes \`a $\Spec(\bar{k})$, donc r\'eduites.
Le m\^eme raisonnement permet de comparer $\Aut_k(Z,\mathcal{O}(1))$ et $\Aut_{\bar{k}}(Z_{\bar{k}},\mathcal{O}(1))$. Dans la suite, on suppose donc
$k$ al\-g\'e\-bri\-que\-ment clos.

Commen\c{c}ons par montrer que si $n\geq 2$ et si l'on n'est pas dans le cas (iv), $\Aut_k(Z)=\Aut_k(Z,\mathcal{O}(1))$. Pour cela, soient $T$ un $k$-sch\'ema et 
$f\in \Aut_k(Z)(T)$ un $T$-automorphisme de $Z_T$. Il faut montrer que le diagramme (\ref{diagpic}) commute. Raisonnant composante connexe par composante connexe, 
on peut supposer $T$ connexe. Remarquons tout d'abord que, comme 
$n\geq 2$, la proposition \ref{propic} (ii) montre que 
l'espace tangent en l'identit\'e $H^1(Z,\mathcal{O}_Z)$ \`a $\mathbf{Pic}_{Z/k}$ est nul, de sorte que $\mathbf{Pic}_{Z_T/T}$ est r\'eunion
de composantes connexes $T$-isomorphes \`a $T$. Les sections $\lambda_T$ et $f^*\circ\lambda_T$ sont n\'ecessairement des isomorphismes sur l'une de ces composantes 
connexes : ainsi, pour qu'elles co\"incident, il suffit qu'elles co\"incident en un point g\'eom\'etrique. On est donc ramen\'es \`a v\'erifier que, sous
nos hypoth\`eses, un automorphisme $f$
d'une intersection compl\`ete lisse $Z$ sur un corps alg\'ebriquement clos pr\'eserve $\mathcal{O}(1)$. Si $n\geq 3$, 
par th\'eor\`eme de Lefschetz, $\mathcal{O}(1)$ est 
l'unique g\'en\'erateur ample du groupe de Picard de $Z$, et est donc pr\'eserv\'e par $f$. Si $n=2$, mais qu'on n'est pas dans le cas (iv), 
\cite{SGA7} Expos\'e XI, Th. 1.8
montre que $\mathcal{O}(1)$ est l'unique g\'en\'erateur ample du sous-groupe du groupe de Picard de $Z$ constitu\'e des fibr\'es en droites dont un multiple
est proportionnel au diviseur canonique ; cette caract\'erisation montre qu'il est pr\'eserv\'e par $f$.

D'autre part, par la proposition \ref{champdev}, si l'on n'est pas dans un des cas (i), (ii) ou (v), $H^0(Z,T_Z)=0$. Comme cet espace vectoriel s'identifie
\`a l'espace
tangent en l'identit\'e de $\Aut_k(Z)$, $\Aut_k(Z)$ est alors un sch\'ema en groupes r\'eduit de dimension $0$. 

Ces deux faits, combin\'es aux r\'esultats g\'en\'eraux du paragraphe \ref{parauto}, 
%($\Aut_k(Z)$ est localement de type fini avec un nombre 
%d\'enombrable de composantes connexes de type fini et $\Aut_k(Z,\mathcal{O}(1))$ en est un sous-sch\'ema en groupes de type fini) 
montrent le th\'eor\`eme 
sauf dans les cas (i), (ii), (iii) et (v) qu'on discute \`a pr\'esent.

Le cas (i) des quadriques est classique : la composante connexe de $\Aut_k(Z)$ est un groupe semi-simple de type orthogonal.
%cons\'equence de la proposition \ref{quadriques}.

Le cas (iii) des courbes de genre $\geq 2$ est \'egalement classique, mais donnons un argument. 
Comme $T_Z$ est un fibr\'e en droites anti-ample, $H^0(Z,T_Z)=0$, de sorte que $\Aut_k(Z)$
est r\'eduit de dimension $0$. Pour montrer que ce sch\'ema en groupes est fini, on peut remarquer que, comme un automorphisme pr\'eserve $3K_Z$,
$\Aut_k(Z)=\Aut_k(Z,3K_Z)$. Alors, en consid\'erant le plongement 
tricanonique de $Z$ et en
proc\'edant comme au paragraphe \ref{autoproj}, 
on r\'ealise $\Aut_k(Z)$ comme un sous-sch\'ema en groupes d'un groupe lin\'eaire, de sorte qu'il est de type fini, donc fini.
Enfin, $\Aut_k(Z,\mathcal{O}(1))$ est fini r\'eduit comme sous-sch\'ema en groupes de $\Aut_k(Z)$.

Dans le cas (ii), $Z$ est une courbe de genre $1$. La courbe elliptique $E$ sous-jacente agit par translations sur $Z$. 
Comme $h^0(Z,T_Z)=h^0(Z,\mathcal{O}_Z)=1$, on voit que $E$ s'identifie \`a la composante connexe de l'identit\'e de $\Aut_k(Z)$. Finalement, le groupe des
composantes connexes de $\Aut_k(Z)$ est fini par \cite{Silverman}, Chapter III, Theorem 10.1.
Il reste \`a d\'eterminer le sch\'ema en groupes $E\cap \Aut_k(Z,\mathcal{O}(1))$. Il est expliqu\'e dans \cite{KatzSarnak} 11.7 p. 342 que c'est $E[3]$
(resp. $E[4]$) si $N=2$, $c=1$ et $d_1=3$ (resp. si
$N=3$, $c=2$ et $d_1=d_2=2$). Le Corollary 6.4 de \cite{Silverman} permet de comparer le degr\'e et le cardinal de ce sous-groupe, et donc de v\'erifier qu'il
est non r\'eduit si et seulement si $p=3$ (resp. $p=2$).

Traitons enfin le cas (v). On note $H$ le sch\'ema de Hilbert des intersections compl\`etes lisses.
Le th\'eor\`eme \ref{principal} sous sa forme (iii) montre la propret\'e de l'action de $PGL_{N+1}$ sur $H$.
La description de $\Aut_k(Z,\mathcal{O}(1))$ comme sous-groupe 
de $PGL_{N+1}$ montre qu'il s'identifie au stabilisateur de $[Z]\in H(k)$ pour cette action : il est donc propre. 
Comme il est affine comme sous-groupe ferm\'e d'un groupe affine, 
il est fini. La proposition \ref{quadrdiago} montre que si $p\neq 2$, l'espace tangent en l'identit\'e de 
$\Aut_k(Z,\mathcal{O}(1))$ est trivial, de sorte que ce sch\'ema en groupes est fini et r\'eduit. 
La proposition \ref{quadrdeux} montre, elle, que si $p=2$, l'espace tangent en l'identit\'e de ce groupe est non trivial,
de sorte que ce sch\'ema en groupes est fini mais
non r\'eduit.
\end{proof}

\subsection{Propri\'et\'e de Deligne-Mumford}\label{partieDM}

On peut enfin montrer :

\begin{thm}\label{icDM}

Le champ $\mathcal{M}$ est de Deligne-Mumford sauf dans les cas suivants :

\begin{enumerate}[(i)]
 \item Si $c=1$ et $d_1=2$.

\item Si $N=2$, $c=1$, $d_1=3$, auquel cas il est de Deligne-Mumford au-dessus de $\Spec(\mathbb{Z}[\frac{1}{3}])$.

\item Si $N\geq 3$ est impair, $c=2$, $d_1=d_2=2$, auquel cas il est de Deligne-Mumford au-dessus de $\Spec(\mathbb{Z}[\frac{1}{2}])$.
\end{enumerate}
 
\end{thm}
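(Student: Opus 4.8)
The plan is to reduce the Deligne--Mumford property of $\mathcal{M}$ to the \'etaleness of the stabilizer group schemes of the $PGL_{N+1}$-action on $H$, and then to read off the answer from Theorem \ref{autoic}. Being the quotient stack $[PGL_{N+1}\backslash H]$ of a scheme by a smooth affine group scheme, $\mathcal{M}$ is algebraic, and it is Deligne--Mumford precisely when its diagonal $\Delta$ is unramified. I would reuse the $2$-cartesian square (\ref{diagonalecartesien}): because $(P,P)$ is a smooth surjection, hence an fppf cover, and because unramifiedness is fppf-local on the base (the same descent principle that yielded properness through \cite{LMB} 7.11.1 in the separation argument), the diagonal $\Delta$ is unramified if and only if the action morphism $(\sigma,p_2)\colon PGL_{N+1}\times H\to H\times H$ is unramified.

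I would then identify the relevant fibers. The morphism $(\sigma,p_2)$ sends $(g,Z)$ to $(g\cdot Z,Z)$, so its geometric fiber over a point $(Z,Z)$ is the stabilizer of $[Z]$, which by the identification of paragraph \ref{autoproj} is exactly $\Aut_k(Z,\mathcal{O}(1))$ realised as a closed subgroup scheme of $PGL_{N+1}$. A morphism of finite type is unramified if and only if each of its geometric fibers is unramified over the residue field; and since $\Aut_k(Z,\mathcal{O}(1))$ is of finite type (being closed in $PGL_{N+1}$), being unramified over $k$ is equivalent to being \'etale, equivalently $0$-dimensional and reduced, and this already forces finiteness. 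Hence, over an algebraically closed field of characteristic $p$, the stack $\mathcal{M}$ is Deligne--Mumford exactly when $\Aut_k(Z,\mathcal{O}(1))$ is finite and reduced for every smooth complete intersection $Z$ of the prescribed multidegree.

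It remains to feed this criterion into Theorem \ref{autoic}, which records precisely when $\Aut_k(Z,\mathcal{O}(1))$ fails to be finite and reduced. In case (i) of that theorem (quadrics, $c=1$, $d_1=2$) the group is smooth of dimension $\frac{N(N+1)}{2}>0$, hence never unramified in any characteristic, which is the exception (i). Outside this case $\Aut_k(Z,\mathcal{O}(1))$ is finite, and it is reduced in every characteristic except two situations: $p=3$ when $N=2$, $c=1$, $d_1=3$, coming from case (ii); and $p=2$ when $c=2$, $d_1=d_2=2$ with $N$ odd, which merges the genus-one sub-case $N=3$ of case (ii) with the higher-dimensional sub-cases $N\geq 5$ of case (v). In every remaining parameter range, cases (iii) and (iv) and the generic case of Theorem \ref{autoic} guarantee finiteness and reducedness in all characteristics. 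Since restricting $\mathcal{M}$ to $\Spec(\mathbb{Z}[\frac{1}{\ell}])$ merely discards the fibers of characteristic $\ell$, one obtains exactly the three stated families: not Deligne--Mumford in any characteristic in case (i), Deligne--Mumford over $\Spec(\mathbb{Z}[\frac{1}{3}])$ in case (ii), and over $\Spec(\mathbb{Z}[\frac{1}{2}])$ in case (iii).

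The essential mathematical content is already carried by Theorem \ref{autoic}; the only genuine steps here are the descent reduction of the Deligne--Mumford property to \'etaleness of the stabilizers, together with the observation that, for a finite-type group scheme over a field, being unramified is the same as being finite and reduced. The one point deserving attention is pure bookkeeping: the family (iii) is not a single case of Theorem \ref{autoic} but the union of the genus-one curve $N=3$ and the intersections of two quadrics of dimension $\geq 3$, and it is a mild coincidence that both lose reducedness in exactly characteristic $2$, which is what allows them to be packaged under the single condition $N$ odd, $c=2$, $d_1=d_2=2$ over $\Spec(\mathbb{Z}[\frac{1}{2}])$.
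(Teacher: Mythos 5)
Your proposal is correct and takes essentially the same approach as the paper: identify the geometric stabilizers of the $PGL_{N+1}$-action on $H$ with the groups $\Aut_k(Z,\mathcal{O}(1))$ via paragraph \ref{autoproj}, reduce the Deligne--Mumford property of $\mathcal{M}=[PGL_{N+1}\backslash H]$ to these stabilizers being finite and reduced (equivalently \'etale), and read off the exceptional characteristics from Theorem \ref{autoic}. The only difference is one of detail, not of substance: you justify via the square (\ref{diagonalecartesien}) and fppf descent of unramifiedness the criterion that the paper invokes in a single sentence, and your bookkeeping merging case (ii) with $N=3$ and case (v) of Theorem \ref{autoic} into the single family (iii) is exactly what the paper's statement does.
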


\begin{proof}[$\mathbf{Preuve}$]
Soient $k$ un corps alg\'ebriquement clos et $Z$ une intersection compl\`ete lisse sur $k$.
La description de $\Aut_k(Z,\mathcal{O}(1))$ comme sous-groupe 
de $PGL_{N+1}$ montre qu'il s'identifie au stabilisateur de $[Z]\in H(k)$ pour l'action de $PGL_{N+1}$ par changement de coordonn\'ees.
Comme le champ $\mathcal{M}$ est de Deligne-Mumford si et seulement si les stabilisateurs g\'eom\'etriques de l'action de $PGL_{N+1}$ sur $H$
sont finis et r\'eduits, le th\'eor\`eme \ref{autoic} permet de montrer la proposition.
\end{proof}

\addcontentsline{toc}{section}{R\'{e}f\'{e}rences}

\bibliographystyle{plain-fr}
\bibliography{biblio}

\end{document}